\newtheorem{Theorem}{Theorem}%[section]
\newtheorem{MainTheorem}[Theorem]{Main Theorem}%[section]
\newtheorem{Lemma}[Theorem]{Lemma}
\newtheorem{Corollary}[Theorem]{Corollary}
\newtheorem{Proposition}[Theorem]{Proposition}
\newtheorem{Remark}[Theorem]{Remark}
\newtheorem{Example}[Theorem]{Example}
\newcommand{\eps}{\varepsilon}
\newcommand\la{\lambda}
\newcommand\vphi{\varphi}
\newcommand\al{\alpha}
\newcommand\si{\sigma}
\newcommand\be{\beta}
\newcommand\Si{\Sigma}
\newcommand\ga{\gamma}
\newcommand\Ga{\Gamma}
\newcommand\de{\delta}
\newcommand\De{\Delta}
\newcommand\bfp{\mbox {\bf  p}}
\newcommand\bfx{\mbox {\bf  x}}
\newcommand\bfz{\mbox {\bf  z}}
\newcommand\bfy{\mbox {\bf  y}}
\newcommand\bfa{\mbox {\bf  a}}
\newcommand\nlind{\nl \indent}
\newcommand\nl{\newline}
\newcommand\ord{{\rm{ord}\/}}
\newcommand\Cone{\rm{Cone}\/}
\newcommand\Vol{\rm{Vol}\/}
\newcommand\pdeg{{\rm{pdeg}\/}}
\newcommand\inv{^{-1}}
\def\inv{^{-1}}
\begin{document}
\title[On the  Milnor fibration for $f(\mathbf z)\bar g(\mathbf z)$
%of plane curves and non-reduced degeneration
%{ \today}
]
{On the Milnor fibration for $f(\mathbf z)\bar g(\mathbf z)$
}

\author
%Normally smooth divisors  { \today}
[M. Oka ]
{Mutsuo Oka }
\address{\vtop{
\hbox{Department of Mathematics}
\hbox{Tokyo  University of Science}
\hbox{1-3, Kagurazaka, Shinjuku-ku}
\hbox{Tokyo 162-8601}
\hbox{\rm{E-mail}: {\rm oka@rs.kagu.tus.ac.jp}}
}}

%\today
%\thanks{}
\keywords {Mixed function, multiplicity condition}
\subjclass[2000]{14J70,14J17, 32S25}

\begin{abstract}
We consider a mixed function of type $H(\mathbf z,\bar{\mathbf z})=f(\mathbf z)\bar g({\mathbf z})$ where $f$ and $g$ are %Newton non-degenerate
 convenient holomorphic functions which  have isolated critical points at the origin
 %non-degenerate Newton boundaries
 and we assume that  the intersection $f=g=0$ is  a  complete intersection variety with an isolated singularity at the origin and $H$ satisfies the multiplicity condition. %  (\S \ref{multiplicity}).
 %in  the sense of \cite{Okabook}. 
 We will show that $H$ has a  tubular Milnor fibration at the origin. We also prove that $H$ has a spherical Milnor fibration, assuming Newton non-degeneracy of the intersection variety $f=g=0$ and Newton multiplicity condition. We give  examples which does not satisfy the Newton  multiplicity condition and which have or do not have
 Milnor fibration.
% In the last  section, we  will prove the tubular Milnor fibration theorem replacing the non-deneneracy condition with a milder condition.
\end{abstract}
\maketitle

\maketitle
  \noindent
\section{Introduction}
Let $f(\bfz,\bar\bfz)$ be a mixed function with $f=g+i\,\,h$
where $g,h$ are real valued analytic functions of
$n$ complex variables $z_1,\dots, z_n$ or of  $2n$ real variables $\{x_j,y_j\,|\, j=1,\dots, n\}$.  Here
$\bfz=(z_1,\dots, z_n)\in \mathbb C^n$ and  $z_j=x_j+\,i \,y_j\,(j=1,\dots,n)$
with $x_j, y_j\in \mathbb R$.
The mixed hypersurface $\{f=0\}$ can be understood as the real analytic variety in $\mathbb R^{2n}$ defined by  $\{g=h=0\}$. %Functions 
$g,h$
are real valued real analytic functions of variables 
$\bfx=(x_1,\dots, x_n)$ and $\bfy=(y_1,\dots, y_n)$ but they can be considered as  mixed functions by the substitution $x_j=(z_j+\bar z_j)/2,\, y_j=-\,i (z_j-\bar z_j)/2$.
For  a mixed function $k(\mathbf z,\bar{\mathbf z})$, we use the following notations as in \cite{OkaAf}. 
\begin{eqnarray*}
d k&=&(d_{\bfx}k,d_{\bfy}k)\in \mathbb R^{2n}\,\,\text{where}\\
d_{\bfx}k&=&\left (\frac{\partial k}{\partial {x_1}},\dots, \frac{\partial k}{\partial {x_n}}\right), \,\,
d_{\bfy}k=\left (\frac{\partial k}{\partial {y_1}},\dots,\frac{\partial k}{\partial {y_n}}\right)\\
\end{eqnarray*}
%Here $k_{x_i}, k_{y_j}$ are respective partial derivatives.
$\mathbf z=\mathbf x+\,i \mathbf y\in \mathbb C^n$ and $(\mathbf x,\mathbf y)\in \mathbb R^{2n}$ are identified.
% by $\bfz\leftrightarrow \bfz_{\mathbb R}=(\bfx,\bfy)$.
%For  a mixed function $k(\mathbf z,\bar{\mathbf z})$,
%(not necessarily real-valued), we define also 
The holomorphic  gradient and the anti-holomorphic gradient  of $k$ are 
%vectors in $\mathbb C^n$ 
defined by
\[\begin{split}
%grad_\partial 
&\partial k:=\left(\frac{\partial k}{\partial z_1},\dots, \frac{\partial k}{\partial z_n}\right ),\,
%=\frac 12( d_{\bfx}k-id_{\bfy} k),\\
{\bar\partial} k:=\left (\frac{\partial k}{\partial \bar z_1},\dots, \frac{\partial k}{\partial\bar z_n}\right).
%\frac 12(d_{\bfx} k+i d_{\bfy}k)
\end{split}
\]
%特に$k$が実数値関数なら

Note that if $k$ is real-valued, we have the equality 
$
\overline{\partial k}={\bar\partial} k,$
and 
real gradient vector $dk\in\mathbb R^{2n}$ corresponds to the complex  gradient vector $ 2 \overline{\partial k}\in \mathbb C^n$
under the canonical correspondence
\[\mathbb R^{2n}\ni (\mathbf x,\mathbf y)\iff \mathbf z= \mathbf x+\,i \mathbf y\in \mathbb C^n.
\].
%\end{eqnarray}

\begin{Proposition}[Proposition 1 \cite{OkaPolar}, Lemma 2 \cite{OkaAf}]\label{mixed critical}
Let $f(\mathbf z,\bar{\mathbf z})$ be a mixed function and 
put $f=g+\,i h$ as before.
The next conditions are equivalent.
\begin{enumerate}
\item $\bfa\in \mathbb C^n$ is a critical point of the mapping $f:\mathbb C^n\to \mathbb C$.
\item $dg(\bfa),\,dh(\bfa)$ are linearly dependent in $\mathbb R^{2n}$ over $\mathbb R$.
\item $\bar\partial g(\bfa,\bar \bfa),\,\bar\partial h(\bfa,\bar \bfa)$ are linearly dependent in $\mathbb C^n$ over $\mathbb R$.
\item There exists a complex number $\al$ with $|\al|=1$ such that 
$\overline{\partial f}(\bfa,\bar \bfa)=\al\,\bar \partial f(\bfa,\bar \bfa)$.
\end{enumerate}
\end{Proposition}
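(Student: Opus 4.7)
The plan is to establish the chain of equivalences (1)$\Leftrightarrow$(2)$\Leftrightarrow$(3)$\Leftrightarrow$(4), each link being essentially a rewriting.

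For (1)$\Leftrightarrow$(2), I would view $f=(g,h)$ as a smooth map $\mathbb R^{2n}\to\mathbb R^2$. Its real Jacobian at $\bfa$ is the $2\times 2n$ matrix whose rows are $dg(\bfa)$ and $dh(\bfa)$, so $\bfa$ is a critical point of $f$ precisely when this matrix has rank strictly less than $2$, that is, exactly when its two rows are $\mathbb R$-linearly dependent.

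For (2)$\Leftrightarrow$(3), I would invoke the $\mathbb R$-linear isomorphism $\mathbb R^{2n}\cong\mathbb C^n$, $(\bfx,\bfy)\leftrightarrow \bfx+i\bfy$, recalled in the introduction, under which for any real-valued smooth $k$ one has $dk\leftrightarrow 2\overline{\partial k}=2\bar\partial k$. Being an $\mathbb R$-linear bijection, it preserves $\mathbb R$-linear dependence, so dependence of $dg(\bfa),dh(\bfa)$ in $\mathbb R^{2n}$ is equivalent to dependence of $\bar\partial g(\bfa,\bar\bfa),\bar\partial h(\bfa,\bar\bfa)$ in $\mathbb C^n$.

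The main algebraic step is (3)$\Leftrightarrow$(4). Using $f=g+ih$ and the real-valuedness of $g,h$, write
\[
\bar\partial f=\bar\partial g+i\,\bar\partial h,\qquad
\overline{\partial f}=\bar\partial g-i\,\bar\partial h.
\]
Given (3), choose reals $a,b$ not both zero with $a\,\bar\partial g+b\,\bar\partial h=0$; substituting yields $(a+ib)\,\overline{\partial f}+(a-ib)\,\bar\partial f=0$. Since $a+ib\ne 0$ and $|a-ib|=|a+ib|$ by conjugation, one gets $\overline{\partial f}=\alpha\,\bar\partial f$ with $\alpha:=-(a-ib)/(a+ib)$ of modulus one, proving (4). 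Conversely, given $\overline{\partial f}=\alpha\,\bar\partial f$ with $|\alpha|=1$, the same identities give $(1-\alpha)\,\bar\partial g=i(1+\alpha)\,\bar\partial h$; writing $\alpha=e^{i\theta}$, the coefficient ratio
\[
\frac{i(1+e^{i\theta})}{1-e^{i\theta}}=-\cot(\theta/2)
\]
is a \emph{real} number, so $\bar\partial g,\bar\partial h$ are $\mathbb R$-linearly dependent; the degenerate cases $\alpha=1$ and $\alpha=-1$ force $\bar\partial h=0$ and $\bar\partial g=0$ respectively, which is a trivial form of dependence.

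The only point requiring care is the bookkeeping in (3)$\Leftrightarrow$(4): that the modulus-one condition $|\alpha|=1$ emerges precisely from the complex conjugation symmetry $|a-ib|=|a+ib|$ valid for real $a,b$, and that in the reverse direction the resulting ratio is forced to be real exactly when $|\alpha|=1$. Everything else is a direct translation between real gradients of $g,h$ and the complex gradients $\partial f,\bar\partial f$.
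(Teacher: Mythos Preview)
Your argument is correct and complete. The paper does not actually supply a proof of this proposition; it simply cites Proposition~1 of \cite{OkaPolar} and Lemma~2 of \cite{OkaAf}, so there is no in-paper proof to compare against. Your chain (1)$\Leftrightarrow$(2)$\Leftrightarrow$(3)$\Leftrightarrow$(4) is the standard one: the only nontrivial link is (3)$\Leftrightarrow$(4), and you handle it cleanly, including the degenerate cases $\alpha=\pm 1$ and the verification that $i(1+\alpha)/(1-\alpha)$ is real precisely when $|\alpha|=1$.
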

Under the above equivalent conditions, we say that $\bfa$ is a {\em critical point }
or  a {\em mixed singular point} of the mixed function
$f$. For brevity,  we say simply  a singular point in the sense of a mixed singular point.
\begin{Lemma} [Lemma 2, \cite{OkaAf}, cf \cite{Chen}]\label{mix-transverse}
Consider a mixed hypersurface $V_\eta=f\inv(\eta)$ and take $\bfp\in  V_\eta$.  Assume that $\bfp$ is a non-singular point of $V_\eta$
and let $k(\bfz,\bar\bfz)$ be a real valued mixed function on $\mathbb C^n$.
The following conditions are equivalent.
\begin{enumerate} 
\item The restriction $k|V_\eta$ has a critical point at  $\bfp\in V_\eta$.
% intersect at $\bfp$ non-transversely.
\item There exists a complex number $\al$ such that 
\[
\bar\partial k(\bfp)=\al\overline{ \partial f}(\bfp,\bar\bfp)+\bar \al\bar\partial f(\bfp,\bar\bfp).
\]
\item There exist real numbers $c, d$ such that 
\[\bar\partial k(\bfp)=c\bar\partial  g(\bfp,\bar\bfp)+d\bar\partial h(\bfp,\bar\bfp).\]
%\item There exist real numbers $c', d'$ such that 
%\[\bar\partial k(\bfp)=c'\bar\bfv_1(\bfp,\bar\bfp)+d'\bfv_2(\bfp,\bar\bfp).\]

\end{enumerate} 
\end{Lemma}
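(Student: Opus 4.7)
The plan is to use (3) as a pivot: first I would establish (1) $\Leftrightarrow$ (3) via the Lagrange multiplier criterion combined with the identification $du \leftrightarrow 2\bar\partial u$ that holds for real-valued $u$, and then establish (2) $\Leftrightarrow$ (3) by a direct algebraic rewriting of $\overline{\partial f}$ and $\bar\partial f$ in terms of $\bar\partial g$ and $\bar\partial h$.

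For the implication (1) $\Leftrightarrow$ (3), the key first step is to exploit that $\bfp$ is a non-singular point of $V_\eta$: by Proposition \ref{mixed critical} applied to $f-\eta$, this forces $dg(\bfp)$ and $dh(\bfp)$ to be $\mathbb R$-linearly independent in $\mathbb R^{2n}$, so near $\bfp$ the real variety $V_\eta=\{g=\Re\eta\}\cap\{h=\Im\eta\}$ is a smooth real submanifold of codimension $2$ whose conormal space at $\bfp$ is exactly $\mathrm{span}_{\mathbb R}\{dg(\bfp),dh(\bfp)\}$. By the standard Lagrange multiplier characterization, $k|V_\eta$ has a critical point at $\bfp$ iff $dk(\bfp)=c\,dg(\bfp)+d\,dh(\bfp)$ for some $c,d\in\mathbb R$. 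Transporting both sides under the canonical correspondence $\mathbb R^{2n}\to\mathbb C^n$, $du\leftrightarrow 2\bar\partial u$ (valid since $k,g,h$ are real-valued, as noted in the preamble), yields exactly the identity in (3).

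For (2) $\Leftrightarrow$ (3), writing $f=g+ih$ with $g,h$ real gives $\bar\partial f=\bar\partial g+i\bar\partial h$ and $\overline{\partial f}=\bar\partial g-i\bar\partial h$. A direct computation at $(\bfp,\bar\bfp)$ then yields
\[
\alpha\,\overline{\partial f}+\bar\alpha\,\bar\partial f=(\alpha+\bar\alpha)\bar\partial g+i(\bar\alpha-\alpha)\bar\partial h=2\,\Re(\alpha)\,\bar\partial g+2\,\Im(\alpha)\,\bar\partial h,
\]
so the assignment $\alpha\mapsto(c,d)=(2\Re\alpha,2\Im\alpha)$, equivalently $\alpha=(c+id)/2$, provides a real-linear bijection between the representations in (2) and those in (3); existence on one side is therefore equivalent to existence on the other. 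The only non-cosmetic input to the whole argument is the non-singularity hypothesis, which is used solely to guarantee that the conormal to $V_\eta$ at $\bfp$ has real dimension exactly two; beyond that, the proof is just a translation between the real and complex gradient formalisms, so I do not anticipate any substantive obstacle.
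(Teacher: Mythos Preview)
The paper does not supply its own proof of this lemma; it is quoted verbatim from \cite{OkaAf} (with a cross-reference to \cite{Chen}) and then used without further justification. Your argument is correct and is exactly the standard one: the equivalence (1)$\Leftrightarrow$(3) is Lagrange multipliers for the real codimension-two description $V_\eta=\{g=\Re\eta,\,h=\Im\eta\}$ transported via $du\leftrightarrow 2\bar\partial u$ for real-valued $u$, and (2)$\Leftrightarrow$(3) is the identity $\alpha\,\overline{\partial f}+\bar\alpha\,\bar\partial f=2\Re(\alpha)\,\bar\partial g+2\Im(\alpha)\,\bar\partial h$ obtained from $\overline{\partial f}=\bar\partial g-i\bar\partial h$ and $\bar\partial f=\bar\partial g+i\bar\partial h$.
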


%%%%%%%%%%%%
%%%%%%%%%%%%%%%%%%%%%%

\section{ Fibration problem for function $f \bar g$}
\subsection{Non-degenerate mixed functions}
Let $f(\mathbf z,\bar{\mathbf z})$ be a mixed function
of $n$-variables $\mathbf z=(z_1,\dots, z_n)$. Recall that $f$ can be expanded in a convergent power series of mixed monomials
${\mathbf z}^\nu{\bar{\mathbf z}}^\mu$. In \cite{OkaMix}, we have generalized the concept of Newton boundary $\Ga(f)$
%for a mixed function $f$ 
and  %we have 
defined  {\em strong non-degeneracy for  a mixed function.}
Recall that  $f$ is {\em strongly non-degenerate} if for any face $\De$ of $\Ga(f)$,
the face function $f_\De$, restricted on $\mathbb C^{*n}$ is surjective onto $\mathbb C$ and has no critical points. A convenient strongly non-degenerate function has a Milnor fibration (\cite{OkaMix}).
\subsection{Setting of our problem}
In this paper, we consider a mixed function $H$ which take the form
 $H(\mathbf z,\bar{\mathbf z})=f(\mathbf z)\bar g({\mathbf z})$ where 
$f, g$ are holomorphic functions.  Here we mean $\bar g(z):=\overline{g(z)}$.
We consider hypersurfaces  $V(f):=f\inv(0)$, $V(g):=g\inv(0)$, $V(H):=H\inv(0)$ and the intersection variety $V(f,g)=V(f)\cap V(g)$.
Note that  $H$ is  not strongly non-degenerate for $n\ge 3$ by the following reason. Suppose $H$ is non-degenerate.
Any points of the  intersection $V(f)\cap V(g)$ are singular points of $V(H)$, while a convenient strongly non-degenerate mixed function has an isolated singularity at the origin by Corollary 20 of \cite{OkaMix}. This is an obvious contradiction. Thus the mixed function $H(\mathbf z,\bar{\mathbf z})$ is far from a non-degenerate mixed function for $n\ge 3$. However $H(\mathbf z,\bar{\mathbf z})$ is  a very special type of mixed function,
as it is defined by two holomorphic functions $f,g$. The mixed hypersurface $V(H)$ is simply union of two complex analytic hypersurfaces $V(f)$ and $V(g)$ as a set but $V(g)$ is conjugate oriented by $\bar g$.
  We consider the existence of Milnor fibration for such a mixed function. Pichon and Seade have studied such functions, especially for the case $n=2$ ( \cite{Pichon-Seade0,Pichon-Seade1,Pichon-Seade2}). 
  There are also works by  Fernandez de Bobadilla and Menegon Neto \cite{JM},
   Parameswaran and Tibar \cite{Param-Tibar-revised},  Araujo dos Santos, Ribeiro and Tibar \cite{ART1},
  Araujo dos Santos, Ribeiro and Tibar \cite{ART2},  and 
  Joita and Tibar \cite{Joita-Tibar}.
Note that the link of $H$ is the union of two smooth links defined by $f$ and $g$ respectively which intersect transversely along real codimension 2 smooth variety. However  the  link of $g$ is oriented by $\bar g$.

\subsection{Basic assumption}
\subsubsection{Isolatedness}
Unless otherwise stated, we  assume  that 
\begin{enumerate}
\item[(I1)]
$f$ and $g$ are holomorphic functions
such that $V(f), V(g)$ have isolated singularity at the origin.

\item [(I2)] 
The intersection variety $V(f,g):=\{f=g=0\}$ is  a  complete intersection variety with an isolated singularity at the origin.
\end{enumerate}
We fix a positive number
$r_0>0$  so that $V(f), V(g), V(f,g)$ are only singular  at the origin in the ball $B_{r_0}^{2n}$ and for any sphere $S_r^{2n-1}$
of radius $r$ with $0<r\le r_0$ intersects transversely with these varieties.
 
%%%%%%%%%%%%

%There is another important  condition for $H=f\bar g$ to be fibered.
 \subsubsection{Multiplicity condition}\label{multiplicity}
 There is another important  condition for $H=f\bar g$ to be fibered.
 We say that
$H$ satisfies {\em the  multiplicity-condition}  if 
 there exists a good resolution $\pi: X\to \mathbb C^n$ of the holomorphic function $h=fg$ such that
\begin{enumerate}
%\noindent
\item[(i)]
$\pi:X\setminus \pi\inv(\mathbf 0)\to \mathbb C^n\setminus\{\mathbf 0\}$ is biholomorphic
and the divisor defined by $\pi^*(fg)=0$ has only normal crossing  singularities and the  respective strict transforms $\tilde V(f),\,\tilde V(g)$ of $V(f)$ and $V(g)$ are smooth.

%\noindent
\item[(ii) ]Put $\pi\inv(\mathbf 0)=\cup_{j=1}^s D_j$ where $D_1,\dots, D_s$ are  smooth compact divisors in $X$.
Denote  the respective multiplicities of $\pi^*f$  and $\pi^*g$ along $D_j$  by $m_j$ and $n_j$.
Then $m_j\ne n_j$ for $j=1,\dots,s$.
\end{enumerate}

The multiplicity condition has been considered in the paper Fernandez de Bobadilla and Menegon Neto  \cite{JM}
for the case of plane curves.
Note that $\pi$ does not resolve completely the singularities of $V(h)$ but it resolves singularities of  $V(f)$ and $V(g)$.
%%%%%%%%%%
%%%%%
\subsection{Key Lemma} 
We consider the  following  key property for the existence of the tubular Milnor fibration.

\noindent
(SN) (Isolatedness of the critical values)
%(Smoothness of nearby fibers)  
{\em  There exists a positive number $r_1$  such that $0$ is 
the unique critical value of $H$ restricted on $B_{r_1}^{2n}$.}

The following lemma  shows   that (SN) condition follows from  the multiplicity condition.
% (Lemma \ref{nearby fiber}.
\begin{Lemma}[Isolatedness of critical values]\label{nearby fiber}
Under the assumption (I1), (I2) and the multiplicity-condition,
there exist positive numbers $r_1,\,r_1\le r_0$ and $\de\ll r_1$ such that
the nearby fiber $V_\eta:=H\inv(\eta)$ has no mixed singularity in the ball
 $B_{r_1}^{2n}$ for any non-zero $\eta$ with $|\eta|\le \de$. 
\end{Lemma}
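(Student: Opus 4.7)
I argue by contradiction using the resolution provided by the multiplicity condition. Suppose the conclusion fails, so there exist sequences $\bfa_k\to\mathbf 0$ and $\eta_k\to 0$ with $\eta_k\ne 0$, $H(\bfa_k)=\eta_k$, and each $\bfa_k$ a mixed singular point of $V_{\eta_k}$. Since $H(\bfa_k)\ne 0$, both $f(\bfa_k)$ and $g(\bfa_k)$ are nonzero.

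First I recast the mixed critical condition in logarithmic form. From $\partial H=\bar g\,\partial f$ and $\bar\partial H=f\,\overline{\partial g}$, condition (4) of Proposition~\ref{mixed critical} reads $g(\bfa)\overline{\partial f}(\bfa)=\alpha f(\bfa)\overline{\partial g}(\bfa)$ with $|\alpha|=1$. Dividing by $fg\ne 0$ and taking complex conjugates, this is equivalent to
\[
\partial\log f(\bfa_k)=\beta_k\,\partial\log g(\bfa_k),\qquad |\beta_k|=1,
\]
for some $\beta_k\in S^1$; equivalently $d\log f=\beta_k\,d\log g$ as $1$-forms at $\bfa_k$.

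Next I lift to the resolution $\pi:X\to\mathbb C^n$ of $h=fg$. Since $\pi$ is proper and biholomorphic off $\pi^{-1}(\mathbf 0)$, the points $\bfw_k:=\pi^{-1}(\bfa_k)$ lie in the compact set $\pi^{-1}(\overline{B_{r_0}^{2n}})$, so after extracting a subsequence $\bfw_k\to\bfw_\infty\in\pi^{-1}(\mathbf 0)=\bigcup_j D_j$. Choose a normal-crossing chart $(w_1,\dots,w_n)$ around $\bfw_\infty$ in which every divisor of $\pi^*(fg)$ through $\bfw_\infty$ is a coordinate hyperplane, and write
\[
\pi^*f=U(w)\prod_i w_i^{M_i},\qquad \pi^*g=V(w)\prod_i w_i^{N_i},
\]
with $U,V$ holomorphic units near $\bfw_\infty$. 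Coordinates $w_i$ cutting out an exceptional component $D_j$ carry $(M_i,N_i)=(m_j,n_j)$; those cutting out strict transforms of $V(f)$ or $V(g)$ contribute $(1,0)$ or $(0,1)$; the rest have $M_i=N_i=0$. Because $\bfw_\infty\in\pi^{-1}(\mathbf 0)$, some coordinate, say $w_1$, defines an exceptional divisor $D_{j_0}$, so $(M_1,N_1)=(m_{j_0},n_{j_0})$ with $m_{j_0}\ne n_{j_0}$ by the multiplicity condition.

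To finish, I pull back the critical identity. Using $\pi^*(d\log f)=dU/U+\sum_i M_i\,dw_i/w_i$ and the analogous formula for $g$, the equality $\pi^*(d\log f)=\beta_k\,\pi^*(d\log g)$ at $\bfw_k$ forces, on the $dw_1$-coefficient,
\[
\frac{M_1}{w_1(\bfw_k)}+\frac{\partial_{w_1}U}{U}(\bfw_k)=\beta_k\!\left(\frac{N_1}{w_1(\bfw_k)}+\frac{\partial_{w_1}V}{V}(\bfw_k)\right).
\]
Multiplying by $w_1(\bfw_k)$, extracting a further subsequence with $\beta_k\to\beta_\infty\in S^1$, and using $w_1(\bfw_k)\to 0$, I obtain $M_1=\beta_\infty N_1$ with $|\beta_\infty|=1$. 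Since $M_1,N_1$ are nonnegative integers, this forces $M_1=N_1$, contradicting $m_{j_0}\ne n_{j_0}$.

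The step I expect to require the most care is producing the correct normal-crossing chart and bookkeeping the multiplicities $(M_i,N_i)$: in particular, verifying that whenever $\bfw_\infty$ additionally lies on a strict transform of $V(f)$ or $V(g)$, those contributions fall in coordinates $w_i$ distinct from the chosen $w_1$, so that the $dw_1$-coefficient analysis delivering the contradiction is unaffected.
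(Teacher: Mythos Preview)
Your argument is correct and is essentially the paper's proof repackaged as a contradiction: both lift to the good resolution, choose a normal-crossing chart at a point of an exceptional divisor $D_{j_0}$, and use $m_{j_0}\ne n_{j_0}$ to rule out the mixed critical condition---your limit $M_1=\beta_\infty N_1$ with $|\beta_\infty|=1$ is exactly the paper's computation that $|\tilde H_{\tau(j)}/\tilde H'_{\tau(j)}|\approx m_j/n_j\ne 1$ near $D_{j_0}$. The only point to tighten is that the literal negation of the lemma yields $\bfa_k$ bounded with $\eta_k\to 0$ rather than $\bfa_k\to\mathbf 0$; however (I1), (I2) immediately exclude accumulation of such critical points at any $p\in V(H)\setminus\{\mathbf 0\}$ (on $V(f)'\cup V(g)'$ one of $|\partial H|,|\bar\partial H|$ stays bounded away from zero while the other tends to zero, and on $V(f,g)'$ the critical condition would force $\partial f,\partial g$ to be $\mathbb C$-dependent), so your reduction to $\bfa_k\to\mathbf 0$ is valid.
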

\begin{proof} We denote $\pi\inv(\mathbf 0)$  by $D$. Recall that $D=D_1\cup\cdots\cup D_s$.
For simplicity, we put $D_{s+1}=\widetilde V(f)$ and $D_{s+2}=\widetilde V(g)$.
In this notation, we put $m_{s+1}=1,m_{s+2}=0$ and $n_{s+1}=0, n_{s+2}=1$.
Take an arbitrary point $p\in D$ and assume that $p\in \bigcap_{j\in J} D_j\setminus\bigcup_{j\notin J}D_j$ where $J\subset \{1,\dots, s+2\}$. By the assumption (1),  $|J|\le n$. Then 
there is a local holomorphic chart $U_p$ with coordinates $(u_1,\dots, u_n)$ and  an injective map
$\tau:J\to \{1,\dots, n\}$ so that  $u_{\tau(j)}=0$ defines $D_j$ in $U_p$
and by the multiplicity assumption  (i) and (ii), we can write 
\begin{eqnarray}\label{pull-back-fg}
\pi^* f&=& k_f\prod_{j\in J}u_{\tau(j)}^{m_j} ,\quad
\pi^*g=k_g\prod_{j\in J}u_{\tau(j)}^{n_j}.
%\pi^*\tilde H=&=&k_f\bar k_g\prod_{j\in J}u_{\tau(j)}^{m_j} {\bar u_{\tau(j)}}^{n_j}.
%&&U_p\cap\bigcup_{j\notin J}D_j=\emptyset.
\end{eqnarray}
where
$k_f,k_g$ are units on $U_p$. We choose 
$U_p$ small enough so that 
$U_p\cap\bigcup_{j\notin J}D_j=\emptyset.$
% We may assume  that  $k_f=1$ if $s+1\in J$ and $k_g=1$ if $s+2\in J$.
Consider the pull-back  $\tilde H:=\pi^*H$.
By the assumption, we can write $\tilde H$ in $U_p$ as
\[\tilde H=k_f\bar k_g\prod_{j\in J} u_{\tau(j)}^{m_j}{\bar u_{\tau(j)}}^{n_j}.
\] 
%Put $D=\bigcup_{j=1}^s D_j$.
Note that $J\cap\{1,\dots, s\}\ne \emptyset $ as $p\in D$.
Now we compute the holomorphic and anti-holomorphic gradient vectors of $\tilde H$ in $U_p$. Put 
\[
\partial {\tilde H}=(\tilde H_1,\dots, \tilde H_n),\, \,\bar\partial {\tilde H}=(\tilde H'_1,\dots, \tilde H'_n)
\]
where 
\[
\tilde H_j=\frac{\partial \tilde H}{\partial u_j}\,\,\text{and}\,\,\tilde H'_j=\frac{\partial \tilde H}{\partial\bar u_j}.
\]
Then by (\ref{pull-back-fg}), we can write
\[\begin{split}
&\tilde H_{\tau(j)}=\frac{\partial \tilde H}{\partial u_{\tau(j)}}=
u_{\tau(j)}^{m_j-1}{\bar u_{\tau(j)}}^{n_j} 
\left( m_j+u_{\tau(j)} \frac{\partial{k_f}}{\partial u_{\tau(j)}}\bar k_g\right)\prod_{k\in J,k\ne j}u_{\tau(k)}^{m_k}{\bar u_{\tau(k)}}^{n_k}\\
%\bar\partial {\tilde H}&=(\tilde H'_1,\dots, \tilde H'_n),\, \,\\
&\tilde H'_{\tau(j)}=\frac{\partial \tilde H}{\partial\bar u_{\tau(j)}}=
u_{\tau(j)}^{m_j}{\bar u_{\tau(j)}}^{n_j-1}
\left( n_j+\bar u_{\tau(j)}\frac{\partial{\bar{k}_g}}{\partial \bar u_{\tau(j)}} k_f\right)\prod_{k\in J,k\ne j}u_{\tau(k)}^{m_k}{\bar u_{\tau(k)}}^{n_k}
\end{split}
\]
Take  one $j\in J\cap\{1,\dots,s\}$. As $m_j\ne n_j$, we can see that 
\[\begin{split}
&|\tilde H_{\tau(j)}|\approx |u_{\tau(j)}|^{m_j+n_j-1} m_j\prod_{k\in J,k\ne j}|u_{\tau(k)}|^{m_k+n_k},\\
&|\tilde H_{\tau(j)}'|\approx |u_{\tau(j)}|^{m_j+n_j-1} n_j\prod_{k\in J,k\ne j}|u_{\tau(k)}|^{m_k+n_k}.
\end{split}
\]
Therefore $|\tilde H_{\tau(j)}/\tilde H_{\tau(j)}'|\approx m_j/n_j\ne 1$
 as  $m_j\ne n_j$ by the multiplicity condition. Thus we can take a smaller neigborhood $U_p'$ if necessary and we may assume that
$|H_{\tau(j)}|\ne |H'_{\tau(j)} |$
for any $\mathbf u\in U_p'\setminus D\cup D_{s+1}\cup D_{s+2}$. %  and $U_p$ is small enough.
Therefore  by Proposition \ref{mixed critical}, 
%there exists a small neighborhood $U_p'\subset U_p$ such that 
$\tilde H: U_p'\setminus D\cup D_{s+1}\cup D_{s+2}\to \mathbb C^*$ has no critical point.
We do this operation for any $p\in D$. As $D$ is compact, we find  finite points $p_1,\dots, p_{\mu}$ such that 
$\cup_{i=1}^\mu U_{p_i} '\supset D$. Put $W=\cup_{i=1}^\mu U_{p_i} '$.  $W$ is an open set containing $D$ so that
$\tilde H:W\setminus (D\cup D_{s+1}\cup D_{s+2})\to \mathbb C^*$ has no critical point. Put ${W'}=\pi(W)$.  As $D=\pi\inv(\mathbf 0)$, ${W}'$ is an open neighborhood of the origin in $\mathbb C^n$. As 
$\pi:  W\setminus (D\cup D_{s+1}\cup D_{s+2})\to {W}'\setminus H\inv(0)$ is biholomorphic, this implies
$H: {W'}\setminus H\inv(0)\to \mathbb C^*$ has no critical point.
%Now we take a small positive number $r$ so that $B_r^{2n}\subset  W'$ and 
%$V(f^I),\,V(g^I),\,V(f^I,g^I)$ are non-singular on $B_r^{2n}\setminus\{\mathbf 0\}$ for any 
%$I\subset \{1,\dots, n\}$. 
This proves the assertion.
\end{proof}
\begin{Remark}The multiplicity condition is a sufficient condition for the mixed smoothness of the nearby fibers but it is not always a  necessary condition.\end{Remark}
There is a paper by Parameswaran and Tibar (\cite{Param-Tibar-revised})
where they gives a  condition to characterize the isolatedness of the critical values.
This  condition for the isolatedness of the critical values  is described by a condition of $Disc(f,g)$,
which is not easy to be checked.
%Unfortunately  their assertions   Lemma 2.5 in  \cite{Param-Tibar}  is not correct. See \cite{Param-Tibar-revised} for the revised version.
%%%%%%%%%%%%
\subsection{Thom's $a_f$-regularity and Hamm-L\^e type Lemma}

% We assume further that any sphere of radius $r\le r_1$ intersects transversely with $V(f), V(g)$ and $V(f,g)
The following follows from Lemma 3 and Corollary 4.1, \cite{Param-Tibar}.
\begin{Lemma} \label{af-property}
Assume that $f,g$ satisfy isolatedness assumption (I1) and (I2) and the multiplicity condition. Then $H$ satisfies $a_f$-regularity.
\end{Lemma}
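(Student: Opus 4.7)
The plan is to verify that the hypotheses of Lemma 3 and Corollary 4.1 of \cite{Param-Tibar} hold in our setting and then invoke those two results. Those references characterize Thom's $a_f$-regularity for mixed functions of the form $f\bar g$ in terms of isolatedness of critical values together with a quantitative comparison between the holomorphic and antiholomorphic gradients, and the multiplicity condition is precisely what is needed to force the quantitative comparison.

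First I would cite Lemma \ref{nearby fiber} to supply property (SN): under (I1), (I2) and the multiplicity condition, $0$ is the unique critical value of $H$ in a small ball, so the ``isolated critical value'' input to \cite{Param-Tibar} is already available. It then remains to verify the gradient inequality that distinguishes $a_f$-regularity from mere (SN).

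This I would carry out on the resolution $\pi: X\to \mathbb C^n$ provided by the multiplicity condition. In a chart $U_p$ one has
\[
\tilde H = k_f\,\bar k_g\prod_{j\in J}u_{\tau(j)}^{m_j}\bar u_{\tau(j)}^{n_j},
\]
and the computation already made in the proof of Lemma \ref{nearby fiber} yields
\[
\frac{|\tilde H_{\tau(j)}|}{|\tilde H'_{\tau(j)}|}\longrightarrow \frac{m_j}{n_j}\ne 1
\]
as one approaches the exceptional divisor $D$ along any direction transverse to a component $D_j$ with $j\in J\cap\{1,\dots,s\}$. This gives a uniform separation of $\overline{\partial \tilde H}$ from being a positive real scalar multiple of $\bar\partial \tilde H$, which is precisely the ``mixed angle'' estimate that controls the limit positions of the tangent hyperplanes to the fibers $\tilde H\inv(\eta)$ as $\eta\to 0$. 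Transporting this through the biholomorphism $\pi: X\setminus \pi\inv(\mathbf 0)\to \mathbb C^n\setminus\{\mathbf 0\}$ delivers the corresponding inequality for $H$ on a punctured neighborhood of the origin, which is exactly the quantitative gradient condition required by \cite{Param-Tibar}.

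The main obstacle is this last translation: converting the pointwise inequality $m_j\ne n_j$ into a \emph{uniform} quantitative gradient bound on a full punctured neighborhood of $\mathbf 0$. Because the divisor $D$ is compact, this is handled exactly by the finite covering $W=\bigcup_{i=1}^\mu U_{p_i}'$ constructed in the proof of Lemma \ref{nearby fiber}, producing a single constant that controls the gradient ratio uniformly over $W\setminus (D\cup D_{s+1}\cup D_{s+2})$ and hence, after pushing forward by $\pi$, over a punctured neighborhood of the origin in $\mathbb C^n$. No further analytic input beyond careful bookkeeping of the exponents $m_j,n_j$ along the components of $D$ is needed, so the argument reduces cleanly to a citation of Lemma 3 and Corollary 4.1 of \cite{Param-Tibar}.
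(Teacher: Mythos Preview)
Your proposal takes a genuinely different route from the paper, and there is a gap in it.

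The paper's own proof (given in \S\ref{proofofaf}) does not use the resolution or the multiplicity condition. It works directly with the stratification $\{V(f)',\,V(g)',\,V(f,g)',\,\{\mathbf 0\}\}$ of $V(H)$ and checks the $a_f$-condition stratum by stratum. On $V(f)'\cup V(g)'$ the map $H$ is a submersion, so the condition is immediate; at the origin the stratum is $0$-dimensional and the condition is vacuous. The only nontrivial stratum is $V(f,g)'$, and there the key input is Proposition~\ref{complex subspace}: each tangent space $T_{p_\nu}V(H,H(p_\nu))$ contains the complex $(n{-}2)$-plane $T_{p_\nu}V(f,f(p_\nu))\cap T_{p_\nu}V(g,g(p_\nu))$, and the latter converges to $T_{p_0}V(f,g)'$ because $\partial f,\partial g$ are linearly independent near $p_0$ by (I2). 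The whole argument uses only (I1) and (I2).

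The gap in your approach is that you are checking the wrong condition at the wrong locus. The ratio $|\tilde H_{\tau(j)}|/|\tilde H'_{\tau(j)}|\to m_j/n_j\ne 1$ is computed as one approaches an \emph{exceptional} component $D_j$ with $j\le s$; these lie over the origin, where $a_f$-regularity is vacuous. The stratum that actually needs work is $V(f,g)'$, which sits away from $\mathbf 0$; there $\pi$ is already biholomorphic, the exceptional multiplicities $m_j,n_j$ for $j\le s$ play no role, and the relevant normal crossing is $D_{s+1}\cap D_{s+2}$ with $(m_{s+1},n_{s+1})=(1,0)$, $(m_{s+2},n_{s+2})=(0,1)$, for which your ratio does not even make sense. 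More fundamentally, the inequality you extract is precisely the no-critical-point criterion of Proposition~\ref{mixed critical}, i.e.\ property (SN), not the limiting-tangent-space condition that $a_f$-regularity demands. Converting a uniform bound on $|\partial\tilde H|/|\bar\partial\tilde H|$ near $D$ into a statement about limits of $\ker dH$ along $V(f,g)'$ would require a separate argument that your sketch does not supply; the paper bypasses this entirely with the complex-subspace observation of Proposition~\ref{complex subspace}.
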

We give a brief proof of this assertion later (\S \ref{proofofaf}).
It is well-known that $a_f$ condition implies the transversality of the nearby fibers   (Proposition 11, \cite{OkaAf}).
The following lemma corresponds  to Lemma (2.1.4), \cite{Hamm-Le1}.
%This enables us to prove the existence of the tubular Milnor fibration for $H(\mathbf z,\bar{\mathbf z})=f(\mathbf z)\bar g({\mathbf z})$.
 Let $r_1\le r_0$ be a small enough positive number as in Lemma  \ref{nearby fiber}.
\begin{Lemma}\label{mixed Hamm-Le}
Assume that $H$ satisfies  (I1), (I2) and the isolatedness of the critical values  (SN).
Then for any $r_2,\, r_2\le r_1$ fixed, there exists a positive number  $\de>0$ which depends on $r_2$ such that 
for any $r,\,r_2\le  r\le r_1$ and $\eta\ne 0, |\eta|\le \de$, the sphere $S_r$ 
and the nearby fiber $V({\eta}):=H\inv(\eta)$
 intersect
 transversely.
\end{Lemma}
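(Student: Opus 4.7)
The plan is to argue by contradiction using compactness of the closed annulus $A:=\{z\in\mathbb C^n:r_2\le|z|\le r_1\}$. Suppose the lemma fails. Then there exist sequences $\eta_n\to 0$ with $\eta_n\neq 0$ and points $z_n\in V(\eta_n)\cap A$ at which $V(\eta_n)$ fails to meet $S_{|z_n|}$ transversely. Because $|z_n|\le r_1$, by (SN) each $z_n$ is a smooth point of $V(\eta_n)$, so non-transversality is equivalent to the restriction of $\rho(z):=|z|^2$ to $V(\eta_n)$ having a critical point at $z_n$. Extracting a convergent subsequence, $z_n\to z_*$ with $|z_*|\in[r_2,r_1]$, so $z_*\neq\mathbf 0$; by continuity $H(z_*)=\lim \eta_n=0$, hence $z_*\in V(f)\cup V(g)$.

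I would then apply Lemma \ref{mix-transverse} to $H$ and $k=\rho$. Using $\bar\partial\rho(z)=z$ together with $\partial H=\bar g\cdot\partial f$ and $\bar\partial H=f\cdot\overline{\partial g}$, one obtains complex numbers $\alpha_n$ satisfying
\begin{equation*}
z_n\;=\;\alpha_n\,g(z_n)\,\overline{\partial f(z_n)}\;+\;\bar\alpha_n\,f(z_n)\,\overline{\partial g(z_n)}.\quad(\ast)
\end{equation*}
The analysis then splits into three cases according to the stratum of $V(H)$ containing $z_*$, and in each case one treats separately the bounded and unbounded behavior of the coefficients of $(\ast)$.

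If $z_*\in V(f)\setminus V(g)$, then $g(z_*)\neq 0$ and $f(z_n)\to 0$; by (I1) the hypersurface $V(f)$ is smooth at $z_*\neq \mathbf 0$, and by $r_1\le r_0$ it meets $S_{|z_*|}$ transversely, so $\overline{\partial f(z_*)}\neq 0$ and $z_*\notin\mathbb C\cdot\overline{\partial f(z_*)}$. If $\{\alpha_n\}$ has a bounded subsequence, taking limits in $(\ast)$ gives $z_*=\alpha_* g(z_*)\overline{\partial f(z_*)}$, contradicting sphere transversality; if $|\alpha_n|\to\infty$, dividing $(\ast)$ by $|\alpha_n|$ and extracting a unit-modulus limit $\beta_*=\lim \alpha_n/|\alpha_n|$ yields $0=\beta_* g(z_*)\overline{\partial f(z_*)}$, contradicting $\overline{\partial f(z_*)}\neq 0$. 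The case $z_*\in V(g)\setminus V(f)$ is entirely symmetric.

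The main obstacle is the case $z_*\in V(f,g)$, where both $f(z_n),g(z_n)\to 0$ and the two terms on the right of $(\ast)$ may degenerate simultaneously. By (I2) and $r_1\le r_0$, $V(f,g)$ is smooth at $z_*$ as a complete intersection and meets $S_{|z_*|}$ transversely; equivalently, $\overline{\partial f(z_*)}$ and $\overline{\partial g(z_*)}$ are $\mathbb C$-linearly independent and $z_*\notin\mathbb C\cdot\overline{\partial f(z_*)}+\mathbb C\cdot\overline{\partial g(z_*)}$. Here I would renormalize by $M_n:=\max(|\alpha_n g(z_n)|,|\bar\alpha_n f(z_n)|)$. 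If $\{M_n\}$ is bounded, extract limits $A=\lim \alpha_n g(z_n)$ and $B=\lim \bar\alpha_n f(z_n)$ to obtain $z_*=A\,\overline{\partial f(z_*)}+B\,\overline{\partial g(z_*)}$, violating sphere transversality. If $M_n\to\infty$, divide $(\ast)$ by $M_n$ and extract limits $A',B'\in\mathbb C$ with $\max(|A'|,|B'|)=1$, so the limit equation $0=A'\,\overline{\partial f(z_*)}+B'\,\overline{\partial g(z_*)}$ contradicts the $\mathbb C$-linear independence supplied by (I2). This exhausts all cases and proves the lemma.
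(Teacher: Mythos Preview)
Your proof is correct. The paper does not spell out a proof of this lemma; it simply says the statement corresponds to Lemma~(2.1.4) of Hamm--L\^e and notes separately that the $a_f$-regularity of Lemma~\ref{af-property} (established later in \S\ref{proofofaf} via Proposition~\ref{complex subspace}) yields the transversality through Proposition~11 of \cite{OkaAf}. Your argument is the direct Hamm--L\^e compactness argument written out explicitly for $H=f\bar g$: you convert non-transversality via Lemma~\ref{mix-transverse} into the gradient relation $(\ast)$ and analyse it on the strata $V(f)',\,V(g)',\,V(f,g)'$ of $V(H)\setminus\{\mathbf 0\}$, using (I1), (I2) and the choice of $r_0$ to supply smoothness and sphere-transversality of each stratum. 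This is the same stratification the paper uses in its $a_f$-argument in \S\ref{proofofaf}; the difference is that where the paper controls limits of tangent spaces through the complex subspace $T_pV(f,a)\cap T_pV(g,b)\subset T_pV_\eta$ of Proposition~\ref{complex subspace}, you instead renormalise the coefficients in $(\ast)$ directly. Both routes rest on the same two facts at a limit point $z_*\in V(f,g)'$: the $\mathbb C$-linear independence of $\partial f(z_*),\,\partial g(z_*)$ from (I2), and $z_*\notin\mathbb C\,\overline{\partial f(z_*)}+\mathbb C\,\overline{\partial g(z_*)}$ from the transversality of $V(f,g)$ with $S_{|z_*|}$. Your version has the advantage of being self-contained under the hypotheses (I1), (I2), (SN) alone, without invoking $a_f$-regularity or the cited external references.
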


%%%%%
By  Lemma \ref{mixed Hamm-Le} and Ehresman's fibration theorem \cite{Wolf1}, we have the following  tubular Milnor fibration theorem, which corresponds to Theorem 29, Theorem 52 (\cite{OkaMix}) and Theorem 9, Theorem 17 (\cite{OkaAf}).
\begin{MainTheorem} \label{tubularMilnor} 
Let $H=f\bar g$ as above and assume that $H$ satisfies the basic assumption (I1),(I2) and
% the smoothness of nearby fiber
the multiplicity assumption.
Let $r_1$ be as in Lemma \ref{mixed Hamm-Le}.
Take a sufficiently small $\de,\,0<\de\ll r_1$ and put 
\[
E(r_1,\de)^*:=\left\{\mathbf z\in B_{r_1}^{2n}\,|\, 0\ne |H(\mathbf z)|\le \de\right\}
\]
and $D_\de^*:=\{\eta\in \mathbb C\,|\,0\ne |\eta|\le \de\}$.
Then $H :\,E(r_1,\de)^*\to D_{\de}^*$ is a locally trivial fibration and its topological equivalence class does not depend on the choice of 
$ r_1$ and $\de$.
\end{MainTheorem}
\begin{Corollary}Assume that $H$ satisfies the isolatedness condition (I1),(I2) and the multiplicity condition. Then $H$ has a tubular Milnor fibration.
\end{Corollary}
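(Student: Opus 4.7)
The corollary is essentially a restatement of the Main Theorem in the language of the existence of a tubular Milnor fibration: by definition such a fibration is precisely a locally trivial fibration $H\colon E(r_1,\delta)^*\to D_\delta^*$ for sufficiently small $r_1,\delta$, well-defined up to topological equivalence. So the plan is simply to invoke the Main Theorem and check that its hypotheses are met from (I1), (I2) and the multiplicity condition.

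To line up the hypotheses, I would reassemble the chain of lemmas preceding the Main Theorem. First, from (I1), (I2) together with the multiplicity condition, Lemma \ref{nearby fiber} supplies the isolatedness of critical values (SN): there are $r_1\le r_0$ and $\delta\ll r_1$ such that for every $\eta$ with $0<|\eta|\le\delta$, the nearby fiber $V_\eta=H^{-1}(\eta)$ has no mixed singular point inside $B_{r_1}^{2n}$. Thus $H$ restricted to $E(r_1,\delta)^*$ is a submersion onto $D_\delta^*$.

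Second, with (SN) in hand, Lemma \ref{mixed Hamm-Le} gives, after possibly shrinking $\delta$, the transverse intersection of $V_\eta$ with every sphere $S_r$ for $r_2\le r\le r_1$ and $0<|\eta|\le\delta$. In particular, the boundary sphere $S_{r_1}$ is transverse to every nearby fiber, so the closure of $E(r_1,\delta)^*$ is a manifold with corners and $H$ restricts to a proper submersion on it.

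These two conditions are exactly the input to the Ehresmann fibration theorem (as quoted in the Main Theorem), which yields the locally trivial fibration $H\colon E(r_1,\delta)^*\to D_\delta^*$ and the independence of its equivalence class from the choices of $r_1$ and $\delta$. Since this is the definition of the tubular Milnor fibration, the corollary follows. There is no real obstacle here; the entire technical weight has already been absorbed into Lemma \ref{nearby fiber} and Lemma \ref{mixed Hamm-Le}, so the corollary is a purely declarative consequence of the Main Theorem.
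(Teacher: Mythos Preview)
Your proposal is correct and matches the paper's approach: the corollary is stated immediately after the Main Theorem with no separate proof, since it is just a restatement of that theorem in the language of tubular Milnor fibrations. Your unpacking of the logical chain through Lemma~\ref{nearby fiber} and Lemma~\ref{mixed Hamm-Le} to the Ehresmann argument is exactly the content already absorbed into the Main Theorem.
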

%%%%%%%%%
\subsection{Spherical Milnor fibration}
We now consider the spherical Milnor mapping
$\vphi: S_{r}^{2n-1}\setminus K\to S^1$ defined by $\vphi(\mathbf z):=H(\mathbf z)/|H(\mathbf z)|$ where 
$K=V(H)\cap S_r^{2n-1}$. 
We need a stronger assumption than the basic assumption.
%\subsection{Basic assumption}\label{basic assumption}
We  assume in this subsection that \begin{enumerate}
\item[(n1)]
$f(\bfz)$ and $g(\bfz)$ are convenient non-degenerate holomorphic  functions in the neighborhood of the origin with respect to the Newton boundaries.
\item[(n2)] $V(f,g)=\{\bfz\in \mathbb C^n\,|\, f(\mathbf z)=g(\mathbf z)=0\}$ is  a  non-degenerate complete intersection variety in the sense of Newton boundary \cite{Okabook}.
\end{enumerate}
We call (n1) and (n2) {\em the Newton non-degeneracy condition}.
\vspace{.3cm}
%\par
The hypersurfaces $V(f)$ and $V(g)$ have isolated singularities at the origin by 
the convenience and non-degeneracy assumption (n1). The intersection  variety $V(f,g)$ has also an isolated singularity at the origin and the intersections of $V(f), V(g)$ are transverse outside of the origin by (n2).  See Lemma (2.2) \cite{Okabook}.
%\subsection{\widetilde-condition}\label{DN-cond}
\subsection{Newton multiplicity condition}
We further consider the following   condition. We say that
$H$ satisfies {\em the  Newton multiplicity condition}  if for
 any strictly positive weight vector $P$,  weighted degrees of $f$ and $g$ under $P$ are not equal, i.e. 
 $d(P;f)\ne d(P;g)$.

The Newton multiplicity condition can be checked by the Newton boundaries $\Ga(f)$ and $\Ga(g)$
as follows.
 \begin{Proposition} Assume that $f,g$ have   convenient Newton boundaries. Then
 $H$ satisfies Newton multiplicity condition if  and only if
 $\Ga(f)\cap \Ga(g)=\emptyset$.
\end{Proposition}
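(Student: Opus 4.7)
My plan is to study the continuous, piecewise-linear, positively $1$-homogeneous function
\[
D(P):=d(P;f)-d(P;g)
\]
on the convex (hence path-connected) open cone $\BR^n_{>0}$, and to exploit the standard characterization that a point $\nu$ lies in $\Ga(f)$ if and only if there exists a strictly positive weight $P>0$ with $\langle P,\nu\rangle=d(P;f)$ (and symmetrically for $g$). Convenience of $f$ and $g$ ensures that $d(P;\cdot)$ is finite on all of $\BR^n_{>0}$ and that every compact face of the Newton polyhedron is realized as $\Delta(P;\cdot)$ for some strictly positive $P$, i.e.\ $\Ga(f)=\bigcup_{P>0}\Delta(P;f)$.

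For the direction $\Ga(f)\cap\Ga(g)=\emptyset\Rightarrow$ the Newton multiplicity condition, I would argue by contradiction. Continuity of $D$ on the connected set $\BR^n_{>0}$ together with the assumption that $D$ never vanishes forces $D$ to have constant sign; say $D>0$ throughout. Any hypothetical $\nu\in\Ga(f)\cap\Ga(g)$ would produce a strictly positive $P$ witnessing $\nu\in\Ga(g)$, so that $\langle P,\nu\rangle=d(P;g)$; but $\nu\in\Ga_+(f)$ would then force $\langle P,\nu\rangle\ge d(P;f)>d(P;g)$, a contradiction.

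For the converse I would use the Intermediate Value Theorem. Given $\nu\in\Ga(f)\cap\Ga(g)$, pick strictly positive witnesses $P_f,P_g$ with $\langle P_f,\nu\rangle=d(P_f;f)$ and $\langle P_g,\nu\rangle=d(P_g;g)$. Because $\nu\in\Ga_+(g)$ one has $d(P_f;g)\le\langle P_f,\nu\rangle=d(P_f;f)$, i.e.\ $D(P_f)\ge 0$; symmetrically $D(P_g)\le 0$. The straight segment from $P_f$ to $P_g$ lies inside the convex set $\BR^n_{>0}$, so continuity of $D$ along this path yields some $P^\ast$ with $D(P^\ast)=0$, which says $d(P^\ast;f)=d(P^\ast;g)$ and so violates the Newton multiplicity condition. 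Note that the witnesses $P_f,P_g$ need not coincide, and that the point $P^\ast$ produced by IVT is generally distinct from the positive weights that realise $\nu$ as an extremal point of either polyhedron.

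The step that deserves most care is the characterization $\Ga(f)=\bigcup_{P>0}\Delta(P;f)$, i.e.\ that the normal cone of every compact face of $\Ga_+(f)$ meets the open positive orthant; under the convenience hypothesis this is standard. After that is in hand, both implications reduce to a short continuity argument, with convexity of $\BR^n_{>0}$ providing the path on which to apply the IVT.
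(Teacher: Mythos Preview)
Your two paragraphs actually establish the \emph{same} implication, namely
\[
\text{Newton multiplicity condition}\ \Longrightarrow\ \Ga(f)\cap\Ga(g)=\emptyset,
\]
and the reverse implication is never addressed. In your first paragraph you announce a proof of $\Ga(f)\cap\Ga(g)=\emptyset\Rightarrow$ Newton multiplicity ``by contradiction'', but you then \emph{assume} that $D$ never vanishes (which is the Newton multiplicity condition itself, i.e.\ the would-be conclusion) and deduce $\Ga(f)\cap\Ga(g)=\emptyset$ (the hypothesis). That is a correct argument, but for the other direction. Your second paragraph, via the contrapositive and the IVT, again proves exactly this same direction. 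Incidentally, your IVT argument here is a bit cleaner than the paper's common-supporting-hyperplane argument, since it lands directly on a \emph{strictly} positive $P^\ast$.

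What is missing is the implication $\Ga(f)\cap\Ga(g)=\emptyset\Rightarrow d(P;f)\ne d(P;g)$ for every strictly positive $P$. Nothing in your write-up uses the hypothesis $\Ga(f)\cap\Ga(g)=\emptyset$ to constrain $D$. The paper supplies the needed geometric step: by convenience, each Newton boundary separates $\BR_{\ge 0}^n$ into the bounded cone $\Ga_-$ and the unbounded $\Ga_+$; if the two boundaries are disjoint then one of them, say $\Ga(g)$, lies in the interior of $\Ga_-(f)$. From this nesting one gets $d(P;f)>d(P;g)$ for every $P>0$ (any point of $\De(P;g)\subset\Ga(g)$ lies strictly below the supporting hyperplane $\langle P,\cdot\rangle=d(P;f)$ of $\Ga_+(f)$), which is the Newton multiplicity condition. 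You need to insert an argument of this type; your function $D$ and the IVT alone cannot produce it, because knowing $D(P^\ast)=0$ only tells you that $\De(P^\ast;f)$ and $\De(P^\ast;g)$ sit on a common affine hyperplane, not that they intersect.
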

\begin{proof}
Assume that $\Ga(f)\cap \Ga(g)=\emptyset$. Then by the convenience assumption, this implies either 
\nlind
(a) $\Ga(f)$ is strictly above $\Ga(g)$ or
\nlind
 (b) $\Ga(g)$ is strictly above $\Ga(f)$.
 \nl In the case of  (a) for example, $\Ga_-(f)$ includes
$\Ga(g)$ in its interior. 
Here $\Ga_-(f)$ is the cone of $\Ga(f)$ and the origin $\mathbf 0$:
\[
\Ga_-(f):=\{t\nu\,|\, \nu\in \Ga(f),\,0\le t\le 1\}.
\]
Obviously by the convenience assumption,  (a) implies $d(P;f)>d(P;g)$ for any weight vector $P$
(respectively $(b)$ implies    $d(P;f)<d(P;g)$).

Suppose  $\Ga(f)\cap \Ga(g)\ne\emptyset$. Then we can find a hyperplane $L: a_1x_1+\dots+a_nx_n=d$ ($a_i\ge 0,\,\forall i$, $d>0$) which is tangent to 
%both of 
$\Ga(f)$ and $\Ga(g)$ in the following sense. Namely $L\cap \Ga(f)\ne \emptyset,\, L\cap \Ga(g)\ne \emptyset$ and $L_+\supset \Ga(f)$ and $L_+\supset \Ga(g)$ where  
$L_+:=\{\mathbf x\in \mathbb R^n \,|\, a_1x_1+\dots+a_nx_n\ge d\}$. 
% as subsets. 
Then considering the weight vector $P=(a_1,\dots, a_n)$,
we have $d(P;f)=d(P;g)=d$.
\end{proof}

Taking an admissible toric modification $\hat\pi:X\to \mathbb C^n$ for the dual  Newton diagram $\Ga^*(fg)$,
as a good resolution, it is clear that
\begin{Proposition} Assume that the Newton non-degeneracy condition
 (n1), (n2) and Newton multiplicity condition. Then (I1),(I2) and the multiplicity condition are satisfied
\end{Proposition}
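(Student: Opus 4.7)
The plan is to produce a single admissible toric modification that simultaneously resolves $V(f)$, $V(g)$ and $V(f,g)$, and to read off the multiplicities $m_j,n_j$ from weighted degrees so that the Newton multiplicity hypothesis translates directly into $m_j\ne n_j$.

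First I would dispose of (I1) and (I2). The convenience and Newton non-degeneracy of $f$ and $g$ give, by the classical Kouchnirenko--Oka statement (Lemma (2.2) of \cite{Okabook}, already quoted in the paragraph preceding the proposition), that $V(f)$ and $V(g)$ have isolated singularities at the origin, which is (I1). Condition (n2) together with the convenience assumption gives that $V(f,g)$ is a non-degenerate complete intersection with an isolated singularity at the origin and that $V(f)$ and $V(g)$ meet transversally off the origin, which is (I2).

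Next I would construct the resolution $\pi:X\to\BC^n$ as an admissible toric modification $\hat\pi$ associated to a regular simplicial subdivision $\Si^*$ of the dual Newton diagram $\Ga^*(fg)$; since $fg$ has convenient Newton boundary, such a $\Si^*$ exists. Write $\hat\pi\inv(\mathbf 0)=\cup_{j=1}^{s}D_j$, where each $D_j=\hat E(P_j)$ corresponds to a vertex $P_j$ of $\Si^*$ that lies strictly in the interior of the first orthant (i.e.\ $P_j$ is a strictly positive primitive integral weight vector). By the general theory of toric modifications (\cite{Okabook}), the total transform $\hat\pi^*(fg)$ has normal crossings, $\hat\pi$ is biholomorphic outside the origin, and the non-degeneracy of $f$ and $g$ (condition (n1)) together with the non-degeneracy of the complete intersection $V(f,g)$ (condition (n2)) guarantees that the strict transforms $\widetilde V(f)$ and $\widetilde V(g)$ are smooth and intersect the exceptional divisors transversally. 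This verifies (i) of the multiplicity condition.

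For (ii), recall the fundamental fact that on $\hat E(P_j)$ the multiplicity of $\hat\pi^*f$ equals the weighted degree $d(P_j;f)$ and that of $\hat\pi^*g$ equals $d(P_j;g)$. So $m_j=d(P_j;f)$ and $n_j=d(P_j;g)$, and since $P_j$ is a strictly positive weight vector, the Newton multiplicity condition yields $m_j\ne n_j$ for every $j=1,\dots,s$. (For the two extra divisors $D_{s+1}=\widetilde V(f)$ and $D_{s+2}=\widetilde V(g)$ the inequality is trivial: the pairs $(m,n)$ are $(1,0)$ and $(0,1)$.) Thus $\hat\pi$ is a good resolution of $h=fg$ fulfilling the multiplicity condition of \S\ref{multiplicity}.

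The only slightly delicate point, and the one I would treat most carefully, is the verification that the strict transforms $\widetilde V(f)$ and $\widetilde V(g)$ are smooth and have the expected intersection behaviour along the exceptional set. This is where (n2) is really used: without the non-degeneracy of the intersection $V(f,g)$, the strict transforms could develop singularities along common exceptional components. Given (n2), however, the local face equations on each toric chart define a smooth non-degenerate complete intersection, so the argument goes through as in Chapter III of \cite{Okabook}, and the proposition follows.
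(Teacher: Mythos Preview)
Your proposal is correct and follows exactly the approach the paper indicates: the paper's ``proof'' is the single sentence preceding the proposition, namely that an admissible toric modification $\hat\pi:X\to\BC^n$ for the dual Newton diagram $\Ga^*(fg)$ serves as the required good resolution, and you have simply spelled out the details---multiplicities along $\hat E(P_j)$ equal $d(P_j;f)$ and $d(P_j;g)$, so the Newton multiplicity condition becomes the multiplicity condition. Your remark that (n2) is what guarantees the normal-crossing behaviour of $\hat\pi^*(fg)$ along the locus where $\widetilde V(f)$ and $\widetilde V(g)$ meet is the one substantive point the paper leaves implicit, and you have identified it correctly.
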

\begin{Lemma} \label{spherical}
We assume (n1),(n2) and Newton multiplicity condition. There exists a positive number $r_3$ so that $\vphi: S_r^{2n-1}\setminus K\to S^1$ has no critical points 
 for  any $r,\,0<r\le r_3$.
\end{Lemma}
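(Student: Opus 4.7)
The plan is to argue by contradiction using the Curve Selection Lemma, then extract a contradiction from the Newton multiplicity condition via the $P$-weighted Euler identity.

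First I make the critical-point equation for $\vphi|_{S_r}$ explicit. On $S_r^{2n-1}\setminus K$ one has $\vphi = e^{i\arg H}$ locally, so $\bfz$ is a critical point of $\vphi|_{S_r}$ iff $d(\arg H) = \mu\,d(|\bfz|^2)$ for some $\mu \in \mathbb R$. For $H = f\bar g$ we have $\partial_j H = \bar g\,\partial_j f$ and $\bar\partial_j H = f\,\overline{\partial_j g}$; equating coefficients of $dz_j$ in the Lagrange-multiplier identity then yields
\[
\frac{\partial_j f(\bfz)}{f(\bfz)} \;-\; \frac{\partial_j g(\bfz)}{g(\bfz)} \;=\; 2i\mu\,\bar z_j, \qquad j=1,\dots,n, \quad \mu \in \mathbb R.
\]
Suppose for contradiction that critical points of $\vphi$ accumulate at $\bfo$. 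The Curve Selection Lemma produces a real analytic arc $\bfz(t)$, $t\in[0,\eps)$, of such critical points with $\bfz(0)=\bfo$, $\bfz(t)\ne\bfo$ and $H(\bfz(t))\ne 0$ for $t>0$. Let $I=\{j:z_j(t)\not\equiv 0\}$. Since convenience (n1), Newton non-degeneracy (n2), and the Newton multiplicity condition are all inherited by the restrictions $f|_{\mathbb C^I}$ and $g|_{\mathbb C^I}$, we may replace $f,g$ by these restrictions and assume $\bfz(t)\in(\mathbb C^*)^n$ for small $t>0$, so that $z_j(t) = a_j t^{p_j} + (\text{higher order})$ with $a_j\ne 0$ and $P=(p_1,\dots,p_n)\in\mathbb R_{>0}^n$ a strictly positive weight vector.

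Let $\Delta_f$, $\Delta_g$ denote the faces of $\Ga(f)$, $\Ga(g)$ on which $d(P;f)$, $d(P;g)$ are attained. Because $f(\bfz(t))g(\bfz(t))\ne 0$ on the arc, the leading values $f_{\Delta_f}(\bfa)$ and $g_{\Delta_g}(\bfa)$ are both nonzero. Contracting the critical-point equation with the vector $(p_1 z_1,\dots,p_n z_n)$ gives
\[
\sum_{j=1}^n p_j z_j(t)\!\left(\frac{\partial_j f}{f} - \frac{\partial_j g}{g}\right)\!(\bfz(t)) \;=\; 2i\mu(t)\sum_{j=1}^n p_j\,|z_j(t)|^2.
\]
The right-hand side is purely imaginary for every $t$, since $\mu(t)\in\mathbb R$ and $\sum p_j|z_j(t)|^2 > 0$. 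On the other hand, the $P$-weighted Euler identity applied to the $P$-homogeneous polynomials $f_{\Delta_f}$ and $g_{\Delta_g}$ forces the left-hand side to tend to the \emph{real} number $d(P;f)-d(P;g)$ as $t\to 0^+$, and this limit is \emph{nonzero} by the Newton multiplicity condition. But a real-analytic function of $t$ that is purely imaginary for all $t>0$ cannot have a nonzero real limit at $t=0$, which is the desired contradiction. Hence $\vphi$ has no critical points in some punctured ball $B_{r_3}^{2n}\setminus\{\bfo\}$, giving the lemma.

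The one delicate step is identifying the leading term of the contracted left-hand side with $d(P;f)-d(P;g)$: one must verify that the cancellations coming from the sum produce exactly the Euler-identity value $d(P;f)f_{\Delta_f}(\bfa)\cdot t^{d(P;f)}$ in the numerator, which then cancels against $f(\bfz(t))\sim f_{\Delta_f}(\bfa)t^{d(P;f)}$ in the denominator (and similarly for $g$). This is a routine but careful asymptotic computation relying crucially on $f_{\Delta_f}(\bfa),g_{\Delta_g}(\bfa)\ne 0$; once it is in place, the real-versus-imaginary clash does the rest.
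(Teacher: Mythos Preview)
Your overall strategy (Curve Selection Lemma, then the $P$-weighted Euler contraction) is exactly the paper's strategy, and in the generic situation your argument is a clean version of the paper's Case~1(b-2). The gap is the sentence ``Because $f(\bfz(t))g(\bfz(t))\ne 0$ on the arc, the leading values $f_{\De_f}(\bfa)$ and $g_{\De_g}(\bfa)$ are both nonzero.'' This implication is false. Nonvanishing of $f(\bfz(t))$ for $t>0$ only gives $\ord_t f(\bfz(t))<\infty$; it does \emph{not} force $\ord_t f(\bfz(t))=d(P;f)$. For instance, with $f=z_1+z_2^2$, $P=(2,1)$, $\bfz(t)=(-t^2+t^3,\,t)$ one has $\bfa=(-1,1)$, $f_P(\bfa)=0$, yet $f(\bfz(t))=t^3\ne 0$. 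Once $f_P(\bfa)=0$, your asymptotic step collapses: for $f=z_1+z_2^2+z_2^3$ on the same arc one computes $\sum_j p_j z_j\partial_j f/f\to 5/2$, not $d(P;f)=2$, so the limit of the contracted left-hand side is no longer $d(P;f)-d(P;g)$ and the Newton multiplicity condition gives no contradiction.

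The paper has to treat these ``tangential'' arcs separately, and this is precisely where hypotheses (n1) and (n2) are used --- hypotheses that your argument mentions only in the reduction to $\mathbb C^I$ and never again. Writing $m_f=\ord_t f(\bfz(t))$, $m_g=\ord_t g(\bfz(t))$, the paper compares the orders $d(P;f)-m_f$ and $d(P;g)-m_g$. If they differ, the leading part of the componentwise critical-point equation involves only $\partial f_P(\bfa)$ or only $\partial g_P(\bfa)$ and forces it to vanish, contradicting (n1). If they are equal and negative, then $f_P(\bfa)=g_P(\bfa)=0$, so $\bfa\in V(f_P,g_P)$, and the leading terms give a linear dependence between $\partial f_P(\bfa)$ and $\partial g_P(\bfa)$, contradicting (n2). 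Only when both equal zero, i.e.\ $f_P(\bfa)\ne 0$ and $g_P(\bfa)\ne 0$, does your Euler-identity argument apply; and there is a further boundary case (the paper's Case~2, where the order of $\la(t)$ matches $\ell-2p_{\min}$) that needs its own treatment via $\frac{d}{dt}\log H(\bfz(t))$. To repair your proof you must supply these missing cases.
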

\begin{proof}
 By Lemma 30 in \cite{OkaMix}, $\mathbf z\in S_r^{2n-1}\setminus K$ is a critical point of $\vphi$ if and only if 
 two vectors
$\mathbf v_2(\mathbf z)$ and $\mathbf z$ are linearly dependent over $\mathbb R$
where $\mathbf v_2(\mathbf z)=\,i \,\left(\overline{\partial \log H}(\mathbf z,\bar{\mathbf z})-\bar\partial \log H(\mathbf z,\bar{\mathbf z})\right)$.
 Assume that  the assertion does not hold.
Using the Curve Selection Lemma (\cite{Milnor,Hamm1}), we can find an analytic path $(\mathbf z(t),\la(t))\in \mathbb C^n\times \mathbb R$  for $t\in [0,1]$ such that for $t\ne 0$,
$H(\mathbf z(t),\bar{\mathbf z}(t))\ne 0$ and $\la(t)\ne 0$, $\mathbf z(0)=\mathbf 0$ and the following equality is satisfied.
\[
\,i \,\left(\overline{\partial \log H}(z(t),\bar{\mathbf z}(t))-\bar\partial \log H(z(t),\bar{\mathbf z}(t))\right)=\la(t)\mathbf z(t).
\]
Using $H=f\bar g$, this reduces to
\begin{eqnarray}
\notag \,i \left(\frac{\overline{\partial f}(\mathbf z(t))}{\bar f(\mathbf z(t))}-\frac {\overline{\partial g}(\mathbf z(t))}{\bar g(\mathbf z(t))}\right)&=&\la(t)\mathbf z(t),\,\,\text{or  equivalently}\\
\label{eq6}
\,i \left(\frac{\overline{f_j}(\mathbf z(t))}{\bar f(\mathbf z(t))}-\frac {\overline{ g_j}(\mathbf z(t))}{\bar g(\mathbf z(t))}\right)&=&\la(t)\mathbf z_j(t),\, j=1,\dots, n
\end{eqnarray}
where $f_j,g_j$ are partial derivatives. Put $m_f=\ord\,f(\mathbf z(t))$ and $m_g=\ord\,g(\mathbf z(t))$.
%Put $I=\{j|\mathbf z_j(t)\not\equiv 0\}$
Consider the expansion of $\mathbf z(t)$ and $\la(t)$:
\[\begin{split}
&z_j(t)=b_jt^{p_j}+\text{(higher terms)}, \\ %\, b_j\ne 0\,\,\text{if}\,\,z_j(t)\not \equiv 0, \, j\in I\\
&\la(t)=\la_0t^a+\text{(higher terms)},\,a\in \mathbb Z,\, \,\la_0\in \mathbb R^*.
%&\be(t)=\be_0t^{m'}+\text{(higher terms)}
\end{split}
\]
Put $I=\{j|\mathbf z_j(t)\not\equiv 0\}$, $P=(p_j)_{j\in I}\in \mathbb N_+^I$  and put
\[
\ell=\min\{d(P;f^I)-m_f, d(P;g^I)-m_g)\}.
\]
To apply the non-degeneracy condition, we may assume $P$ is a weight vector of $\mathbf z$  putting $p_j$
sufficiently large for $j\notin I$ (see \cite{Okabook}).
Then we have the estimation:
\[
\begin{split}
&\ord\,\left\{ \,i \frac{\overline{f_j}(\mathbf z(t))}{\bar f(\mathbf z(t))}-\,i \frac {\overline{ g_j}(\mathbf z(t))}{\bar g(\mathbf z(t))}\right\}\ge \ell-p_j,
%\text{where}\,\,&\ord\,\la(t)\mathbf z_j(t)=a+p_j.
\end{split}
\]
where $\ord\,\la(t)\mathbf z_j(t)=a+p_j $.
If $\ell-p_j>a+p_j$ for some $j$, we get a contradiction $\la_0b_j=0$. Thus we must have 
$\ell- p_j\le a+p_j$ for any $j$.　Thus we have
\[a\ge\ell-2p_{min}\]
where  $p_{min}:=\min\{p_j|j\in I\}$.
Put 
\[\begin{split}
\eps_f&:=\begin{cases} 1,\quad &\ell=d(P;f)-m_f\\
                                          0,\quad &\ell<d(P;f)-m_f,
                                          \end{cases}\\
           \eps_g&:=\begin{cases} 1,\quad &\ell=d(P;g)-m_g\\
                                          0,\quad &\ell<d(P;g)-m_g.
                                          \end{cases}                
\end{split}
\]
\indent
Case 1. Assume that $a>\ell-2p_{min}$.  Then $a+p_j>\ell-p_j$ for any $j\in I$.

\noindent
 (a) Assume that $\eps_f\eps_g=0$.  Then by (\ref{eq6}),  we get a contradiction to the non-degeneracy assumption
$\partial f_P(\mathbf b)=0$ or $\partial g_P(\mathbf b)=0$.

\noindent
(b)
Assume  $\eps_f\eps_g=1$.  Then we get a linear relation on $\partial f_P(\mathbf b)$ and $\partial g_P(\mathbf b)$.

\indent
(b-1) Assume  $\ell<0$. Then $m_f>d(P;f),\, m_g> d(P;g)$ which implies $f_P(\mathbf b)=g_P(\mathbf b)=0$ and thus $\mathbf b\in V(f_P,g_P)$. Thus (\ref{eq6}) gives  a contradiction to the non-degeneracy of $V(f,g)$.

\indent
(b-2) If $\ell=0$, this implies $m_f=d(P;f)$ and $m_g=d(P;g)$. That is, 
$f_P(\mathbf b)\ne 0,g_P(\mathbf b)\ne 0$
and we have equality: 
%\ord\,\la(t)\mathbf z_j(\ord\,\la(t)\mathbf z_j(t)=a+p_jt)=a+p_j
\[
\frac{\overline{f_{P,j}}(\mathbf b)}{\bar f_P(\mathbf b)}-\frac {\overline{ g_{P,j}}(\mathbf b)}{\bar g_P(\mathbf b)}=0.
\]
Multiplying $p_j\bar b_j$ and adding for $j\in I$, using Euler equality
we get the equality
\[
d(P;f)%\frac{\bar f_P(\mathbf b)}{f_P(\mathbf b)}
-d(P;g)
%\frac{\bar g_P(\mathbf b)}{g_P(\mathbf b)}
=0
\]
which is a contradiction to the Newton multiplicity-condition of $H$.

\indent
Case 2. Assume that $a=\ell-2p_{min}$. Put $J=\{j\in I| p_j=p_{min}\}$.
Put $\be_f,\be_g$ be the leading coefficients of $f(\mathbf z(t))$ and $g(\mathbf z(t))$ respectively.
Then by (\ref{eq6}), we get
\begin{eqnarray}\label{eq7}
\,i \left(
\eps_f \frac{\bar f_{P,j}(\mathbf b)}{\overline{\be_f}}-\eps_g\frac{\bar g_{P,j}(\mathbf b)}{\overline{\be_g}}\right)
=\begin{cases} \la_0 b_j,\quad &j\in J\\
0,\quad& j\notin J.
\end{cases}
\end{eqnarray}
Now we consider the differential of  $\Re(i\log f(\mathbf z(t))\bar g(\mathbf z(t)))$.
Put $P\mathbf b=(p_1b_1,\dots, p_nb_n)$.
\begin{eqnarray*}
&&\Re\left(-\frac d{dt}i\log f(\mathbf z(t))\bar g(\mathbf z(t)))\right)\\
&=&\Re\left(-\sum_{j}
i \left (\frac {1}{\be_f} f_{P,j} (\mathbf b)\eps_f b_jp_j+\frac 1{\overline{\be_g}}\overline{g_{P,j}}(\mathbf b)\eps_g \overline{b_j}p_j\right)\right)
t^{\ell-1}+\text{(higher terms)}\\
&=&\Re\left( (P\mathbf b,i\eps_f \frac{\overline{f_{P}}(\mathbf b)}{\overline{\be_f}})+(\overline{P\mathbf b}, i\eps_g\frac{g_{P}(\mathbf b)}{\be_g})
\right)t^{\ell-1}+\text{(higher terms)}\\
&=&\Re\left( P\mathbf b, i\eps_f \frac{\overline{f_P}(\mathbf b)}{\overline{\be_f}}-i\eps_g \frac{\overline{g_P}(\mathbf b)}{\overline{\be_g}}
 \right) t^{\ell-1}+\text{(higher terms)}\\
 &=&\sum_{j\in J}p_{min}\la_0|b_j|^2 t^{\ell-1}+\text{(higher terms)}.
\end{eqnarray*}
This implies $\ord\, \left(-\frac d{dt}i\log f(\mathbf z(t))\bar g(\mathbf z(t)))\right)=\ell-1$. On the other hand,
we have also
\begin{eqnarray*}
&&\left(-\frac d{dt}i\log f(\mathbf z(t))\bar g(\mathbf z(t)))\right)\\
&=&-i(m_f+m_g)t^{-1}+\text{(higher terms)}.
\end{eqnarray*}
Comparing two equalities, we see that $ \ell=0$ and  taking the real part of the last equality, we get a contradiction
$\sum_{j\in J}p_{min}\la_0|b_j|^2=0$.
\end{proof}
Combining with Lemma \ref{mixed Hamm-Le}, we get the following.
%%%%%%%%%%%%%%%%%%%
\begin{Corollary}[Spherical Milnor fibration]\label{sphericalMilnor}
Assuming (n1), (n2) and Newton multiplicity condition, $\vphi:S_r^{2n-1}\setminus K\to S^1$ gives a local trivial fibration for any $r\le \min\{r_3,r_2\}$ where $r_2$ is  a positive number in Lemma \ref{mixed Hamm-Le}.
Here $K=H\inv(0)\cap S_r^{2n-1}$.
\end{Corollary}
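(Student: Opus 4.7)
The statement combines two ingredients already in hand: Lemma \ref{spherical} says that, for $r\le r_3$, the map $\vphi=H/|H|$ is a submersion on $S_r^{2n-1}\setminus K$, and Lemma \ref{mixed Hamm-Le} says that, for $r\le r_2$ and suitably small $\de$, every nearby fiber $V_\eta=H\inv(\eta)$ meets $S_r^{2n-1}$ transversely. My plan is to use these two facts to build a smooth vector field $\xi$ on $S_r^{2n-1}\setminus K$ whose flow integrates to give the local triviality: at each point $\xi$ should project under $d\vphi$ to a fixed nowhere-vanishing tangent vector $\partial_\theta$ on $S^1$, and its integral curves should be defined for all time.

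\textbf{Steps.} First, fix $r\le \min\{r_2,r_3\}$ and a sufficiently small $\de>0$ so that Lemma \ref{mixed Hamm-Le} and Lemma \ref{spherical} both apply. Since $\vphi$ is a submersion, locally at every point of $S_r^{2n-1}\setminus K$ one can lift $\partial_\theta$ to a smooth tangent vector field; using a partition of unity on the paracompact manifold $S_r^{2n-1}\setminus K$, these local lifts glue to a global smooth vector field $\xi$ with $d\vphi(\xi)=\partial_\theta$. The only thing to check is that the flow of $\xi$ does not escape to $K$ in finite time; since $S^1$ is compact and each integral curve projects to a unit-speed curve on $S^1$, the issue is only that the curve might leave every compact subset of $S_r^{2n-1}\setminus K$. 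I would arrange $\xi$ to have, near $K$, a special form coming from the tubular picture. Concretely, on the open tube $E(r,\de)^*\cap S_r^{2n-1}$ one may use that $H$ itself is a submersion (Lemma \ref{nearby fiber}) and that the sphere is transverse to its fibers (Lemma \ref{mixed Hamm-Le}); hence the restriction $H|_{S_r^{2n-1}\cap \{0<|H|<\de\}}\colon \{0<|H|<\de\}\cap S_r^{2n-1}\to D_\de^*$ is already a submersion, and one can choose the local lift of $\partial_\theta$ to be tangent to the level hypersurfaces $|H|=c$ for $c$ small. On the complementary region $\{|H|\ge \de/2\}\cap S_r^{2n-1}$ the manifold is compact (closed in $S_r^{2n-1}$ and disjoint from $K$), so any local lift integrates for all time there. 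Glue the two constructions by a cut-off in $|H|$.

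\textbf{Conclusion and main obstacle.} Once $\xi$ has been constructed with the above properties, its flow $\Phi_t$ exists for all $t\in\mathbb R$ and satisfies $\vphi\circ\Phi_t=\vphi+t$, so the map $(p,t)\mapsto \Phi_t(p)$ trivializes $\vphi$ over $S^1$, giving the desired locally trivial fibration. The main obstacle is the last, gluing step: I must ensure that the field chosen on the tube $\{|H|<\de\}\cap S_r^{2n-1}$ (which is tangent to level sets of $|H|$) and the field chosen on the compact piece $\{|H|\ge \de/2\}$ can be combined without introducing integral curves that run into $K$. This is where the transversality of $S_r^{2n-1}$ with the fibers $V_\eta$ is essential, since it forces the level set $\{|H|=c\}\cap S_r^{2n-1}$ to be a smooth compact hypersurface for each small $c$, so that a trajectory tangent to such a level set cannot reach $K$; combined with compactness outside the tube, this yields the required completeness of the flow.
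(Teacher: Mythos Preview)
Your argument is correct and follows the route the paper intends: the paper offers no detailed proof of the corollary beyond the sentence ``Combining with Lemma \ref{mixed Hamm-Le}, we get the following,'' and you supply precisely the standard Ehresmann-type vector-field construction that turns the two ingredients (Lemma \ref{spherical} for the submersion and Lemma \ref{mixed Hamm-Le} for transversality) into a locally trivial fibration. Your identification of the key point---that transversality makes $H|_{S_r^{2n-1}}$ a submersion near $K$, so one can choose the lift tangent to the compact level sets $\{|H|=c\}\cap S_r^{2n-1}$ and thereby obtain completeness of the flow---is exactly why the paper invokes Lemma \ref{mixed Hamm-Le} here.
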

%\subsection{Equivalence of two Milnor fibration}
%The proof of  equivalence of tubular Milnor fibration and spherical Milnor fibration (under (n1),(n2) and Newton multiplicity condition)  is left to the readers.
%For the spherical Milnor fibration  stated in Corollary \ref{sphericalMilnor}, we do not know if (1),(2) and  multiplicity condition is enough or not.
Pichon and Seade has proved the existence of spherical Milnor fibration assuming the isolatedness of the critical value of $H$ (Theorem 1 \cite{Pichon-Seade1}).  Araujo dos Santos and M. Tibar in \cite{AT} and 
Araujo dos Santos, Ribeiro and Tibar in \cite{ART2} studied   spherical fibration  problem in more general setting.
%%%%%%%%%%%%%%%%%%%%
\begin{Remark}
%In this section, we consider the equivalence problem of two Milnor fibrations.
Assume (n1), (n2) and Newton multiplicity condition.
Two Milnor fibrations are actually equivalent.
For the proof,  we use 
two gradient vector fields  
$\mathbf v_1(\mathbf z)$ and $\mathbf v_2(\mathbf z)$  of $\Re\log H(\mathbf z)$ and 
$\Im\log H(\mathbf z)$. 
%Let $\de$ be sufficiently small and put 
%$N(K):=\{\mathbf z\in S_r^{2n-1}\,|\,|H(\mathbf z)|<\de\}$. 
Construct a vector field $\mathcal X(\mathbf z)$ on 
$B_r^{2n}\cap \{\mathbf z\,|\, |H(\mathbf z)|\ge \de\}$  as in  \S 5.3, \cite{OkaMix} so that 
$\Re(\mathcal X(\mathbf z),\mathbf v_2(\mathbf  z))=0,\,
\Re(\mathcal X(\mathbf z),\mathbf v_1(\mathbf z))>0,\, \Re(\mathcal X(\mathbf z),\mathbf z)>0
$.
A diffeomorphism $\psi: \partial E(r,\de)^*\to S_r^{2n-1}\setminus N(K)$ which gives an equivalence of two fibrations is constructed by the integration of this vector field. Here $\partial E(r,\de)^*:=\{\mathbf z\in B_r^{2n}\,|\, |H(\mathbf z)|=\de\}$ and $N(K):=\{\mathbf z\in S_r^{2n-1}\,|\, |H(\mathbf z)|<\de\}$.
Such a vector field is called a Milnor vector field in \cite{ART1,ART2}.
\begin{comment}
 Along any integration curve $\mathbf z(t)$ starting a point
$p\in \partial E(r,\de)^*$, $\arg H(\mathbf z(t))$ is constant  and $|H(\mathbf z(t))|,\,\|\mathbf z(t)\|$ are strictly increasing. This curve  arrives at
$\mathbf z(s)\in S_r^{2n-1}\setminus N(K)$ for a finite time $s>0$.
Using this integration and the correspondence $p\mapsto \mathbf z(s)$, we can construct a diffeomorphism  %$\psi$ 
%which gives the equivalence of two Milnor fibrations
$\psi: \partial E(r,\de)^*\to S_r^{2n-1}-N(K)$, $N(K):=\{\mathbf z\in S_r^{2n-1}\,|\, |H(\mathbf z)|< \de\}$
which give an equivalence of two Milnor fibrations. 
\end{comment}
\end{Remark}
\section{Topology of the Milnor fiber}
\subsection{Fundamental group of the Milnor fiber}
Let $H=f\bar g$ and $h=fg$. We assume that $H$ satisfies assumption (I1), (I2). %and (SN).
%he multiplicity condition.
Let $F_h$  and $F_H$ be the Milnor fibers of $h$ and $H$ respectively.
$F_H$ is connected (see \cite{OkaConnectivity}).
As $\{H=0\}=\{h=0\}$ as a set,
$\pi_1(B_{r}^{2n}\setminus \{H=0\})$ is abelian for $n\ge 3$ by \cite{Le-Saito}, thus it is isomorphic to $\mathbb Z^2$.
We have two different Milnor fibrations with the same ambient space:
\[h,H:E(r_1,\de)^*\to D_\de^*.
\]
Using the homotopy exact sequence of the Milnor fibrations, we conclude
\begin{Proposition} Assume that $n\ge 3$. The fundamental groups 
$\pi_1(F_h)$ and $\pi_1(F_H)$ are isomorphic to the cyclic group $\mathbb Z$.
\end{Proposition}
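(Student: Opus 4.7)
The plan is to analyze both fibrations simultaneously via the long exact sequence of homotopy groups, exploiting the crucial feature that $H=f\bar g$ and $h=fg$ define the \emph{same} total space $E(r_1,\de)^*$ and the same base $D_\de^*\simeq S^1$, since $V(H)=V(h)$ as sets. The only piece of data that differs is the map to the base, so the fiber fundamental groups will be determined by the induced maps on $\pi_1$.

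First I would identify $\pi_1(E(r_1,\de)^*)$. The inclusion $E(r_1,\de)^*\hookrightarrow B_{r_1}^{2n}\setminus V(H)$ is a homotopy equivalence: one retracts the region $\{|H|>\de\}$ onto $\{|H|=\de\}$ by flowing along a vector field that decreases $|H|^2$ while staying in the ball, which is standard once $r_1$ is chosen as in Lemma \ref{mixed Hamm-Le}. Hence the cited L\^e--Saito result gives
\[
\pi_1(E(r_1,\de)^*)\;\cong\;\pi_1(B_{r_1}^{2n}\setminus V(H))\;\cong\;\BZ^2,
\]
generated by small meridians $\mu_f,\mu_g$ around the smooth parts of $V(f)$ and $V(g)$ respectively.

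Next, I would write the homotopy long exact sequence of the locally trivial fibration $H:E(r_1,\de)^*\to D_\de^*$. Using $\pi_2(S^1)=0$, $\pi_1(S^1)=\BZ$, and the connectedness of $F_H$ (cited from \cite{OkaConnectivity}), it collapses to the short exact sequence
\[
0\longrightarrow \pi_1(F_H)\longrightarrow \BZ^2 \xrightarrow{\;H_*\;}\BZ\longrightarrow 0.
\]
The map $H_*$ is surjective by exactness, and an elementary computation of how $\arg H$ winds along each meridian shows $H_*(\mu_f)=1$ and $H_*(\mu_g)=-1$ (the $-1$ coming from the conjugation in $\bar g$). Since $\BZ^2$ is abelian, $\pi_1(F_H)$ is a subgroup of $\BZ^2$ with cyclic quotient $\BZ$, forcing $\pi_1(F_H)\cong\BZ$. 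Applying the identical argument to the classical Milnor fibration of the holomorphic function $h=fg$ (whose fiber is also connected) gives $h_*(\mu_f)=1$, $h_*(\mu_g)=1$, and so $\pi_1(F_h)\cong\BZ$ as well.

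The only step requiring genuine verification is the homotopy equivalence $E(r_1,\de)^*\simeq B_{r_1}^{2n}\setminus V(H)$, since the rest is diagram chasing in a short exact sequence of abelian groups; but this retraction is routine in the Milnor-fibration setting and I do not foresee any serious obstacle.
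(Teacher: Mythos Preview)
Your proposal is correct and follows essentially the same route as the paper: identify $\pi_1$ of the total space with $\BZ^2$ via the L\^e--Saito theorem, then read off $\pi_1$ of each fiber from the homotopy exact sequence of the fibration. The paper's argument is very terse (it simply says ``using the homotopy exact sequence of the Milnor fibrations, we conclude''), whereas you have filled in the details the paper leaves implicit---the retraction $E(r_1,\de)^*\simeq B_{r_1}^{2n}\setminus V(H)$ and the explicit values $H_*(\mu_f)=1$, $H_*(\mu_g)=-1$, $h_*(\mu_f)=h_*(\mu_g)=1$---but there is no genuine difference in strategy.
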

%%%%%%%%
\subsection{Complex subspace of the tangent space of the Milnor fiber }
In general, the tangent space of a mixed hypersurface  does not have a complex structure. However in our case, we have  the following assertion.
We assume that $r>0$ is sufficiently small so that $f$ and $g$ has no critical points in $B_r^{2n}\setminus\{\mathbf 0\}$.
\begin{Proposition}
\label{complex subspace}
 Let $H=f\bar g$ be as in Theorem \ref{tubularMilnor} and consider a Milnor fiber
$V_\eta:=H\inv(\eta)\cap B_r^{2n}$ in the tubular Milnor fibration. 
For any point $\mathbf p\in V_\eta$, $T_{\mathbf p}V_\eta$ contains a complex subspace of dimension $n-2$.
\end{Proposition}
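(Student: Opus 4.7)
The plan is to exhibit an explicit complex subspace of $T_{\mathbf p}V_\eta$ of complex dimension $n-2$. Since $f,g$ are holomorphic, the mixed function $H=f\bar g$ has the very special gradients $\partial H=\bar g\,\partial f$ and $\bar\partial H=f\,\overline{\partial g}$, so for a real tangent vector identified with $v\in\mathbb{C}^n$ one has
\[
dH(v)\;=\;\bar g(\mathbf p)\sum_{j=1}^n\frac{\partial f}{\partial z_j}(\mathbf p)\,v_j\;+\;f(\mathbf p)\sum_{j=1}^n\overline{\frac{\partial g}{\partial z_j}(\mathbf p)}\,\bar v_j.
\]
Note that on $V_\eta$ both $f(\mathbf p)$ and $g(\mathbf p)$ are non-zero because $\eta\neq 0$, so the two factors in front of the sums are non-vanishing.

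The candidate subspace is
\[
W\;:=\;\ker\partial f(\mathbf p)\cap\ker\partial g(\mathbf p)\;\subset\;\mathbb{C}^n,
\]
a complex-linear subspace. For $v\in W$ the first sum above is zero, and the second is the complex conjugate of $\sum_j (\partial g/\partial z_j)(\mathbf p)\,v_j=0$, hence also zero; thus $dH(v)=0$ and $v\in T_{\mathbf p}V_\eta$. Replacing $v$ by $iv$ multiplies the two sums by $+i$ and $-i$ respectively, so $dH(iv)=0$ as well. Therefore $W$, viewed as a real subspace, lies inside $T_{\mathbf p}V_\eta$ and is stable under multiplication by $i$, i.e.\ it is a genuine complex subspace of the tangent space.

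For the dimension count, $r$ has been chosen small enough that $f$ and $g$ have no critical points in $B_r^{2n}\setminus\{\mathbf 0\}$, so at $\mathbf p\neq\mathbf 0$ both holomorphic linear forms $\partial f(\mathbf p),\partial g(\mathbf p)$ are non-zero. Each of their kernels is a complex hyperplane of $\mathbb{C}^n$, which forces $\dim_{\mathbb C}W\ge n-2$; equality holds unless $\partial f(\mathbf p)$ and $\partial g(\mathbf p)$ are $\mathbb C$-proportional, in which case $\dim_{\mathbb C}W=n-1$ and any complex hyperplane of $W$ will serve.

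I do not expect any step to pose a genuine obstacle: the factorization $H=f\bar g$ causes $dH$ to split cleanly into a holomorphic piece annihilated by $\partial f(\mathbf p)\cdot v=0$ and an anti-holomorphic piece annihilated by $\partial g(\mathbf p)\cdot v=0$, after which the rest is elementary linear algebra on kernels of linear forms. This cleanness is precisely what fails for a general mixed function and explains why such a complex structure in the tangent space is special to the $f\bar g$ setting.
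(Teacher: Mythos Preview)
Your proof is correct and follows essentially the same approach as the paper: both identify the candidate complex subspace as $W=\ker\partial f(\mathbf p)\cap\ker\partial g(\mathbf p)$, which the paper writes geometrically as $T_{\mathbf p}V(f,a)\cap T_{\mathbf p}V(g,b)$ with $a=f(\mathbf p),\,b=g(\mathbf p)$, and both verify $W\subset T_{\mathbf p}V_\eta$ by the same direct computation of $dH$. The only cosmetic differences are that the paper phrases the computation via a curve $\mathbf z(t)$ rather than evaluating $dH(v)$ directly, and it adds an alternative geometric remark in the generic case: when $\partial f(\mathbf p),\partial g(\mathbf p)$ are independent, the complex $(n-2)$-fold $f^{-1}(a)\cap g^{-1}(b)$ sits inside $V_\eta$, so its tangent space already supplies the required subspace.
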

\begin{proof} Put $a=f(\mathbf p)$ and $b=g(\mathbf p)$. Then we have $\eta=a\bar b$.
Consider two hypersurfaces $V(f,a):=f\inv(a)$ and $V(g,b):=g\inv(b)$ and their complex  tangent spaces
\[
T_{\mathbf p}V(f,a),\quad T_{\mathbf p}V(g,b).
\]
They are complex subspaces of dimension $n-1$  of the ambient space $\mathbb C^n$ and they are complex perpendicular to 
the  gradient vectors $\overline{\partial f}(\mathbf p)$ and $\overline{\partial g}(\mathbf p)$. We assert
{$T_{\mathbf p}V(f,a)\cap T_{\mathbf p}V(g,b)\subset T_{\mathbf p}V_\eta$}.
In fact  take an arbitrary tangent vector $\mathbf v\in T_{\mathbf p}V(f,a)\cap T_{\mathbf p}V(g,b)$ and take a smooth curve 
$\mathbf z(t)$ with $\mathbf z(0)=\mathbf p$ and ${d\mathbf z}/{dt}(0)=\mathbf v$. 
Then 
\[\begin{split}
&\frac{dH}{dt}|_{t=0}=\sum_{j=1}^n \frac{\partial f}{\partial z_j}(\mathbf p)\frac{dz_j}{dt}(0) \bar g(\mathbf p)+
\sum_{j=1}^n f(\mathbf p)\overline{\frac{\partial g}{\partial z_j}\frac{dz_j}{dt}(0)}\\
&=\bar g(\mathbf p)(\mathbf v,\overline{\partial f}(\mathbf p))+f(\mathbf p)\overline{(\mathbf v,\overline{ \partial g}(\mathbf p))  }=0.
\end{split}
\]
This proves $T_{\mathbf p}V_\eta\supset T_{\mathbf p}V(f,a)\cap T_{\mathbf p}V(g,b)$.
In the special case that ${\partial f}(\mathbf p)$ and ${\partial g}(\mathbf p)$ are linearly dependent at $\mathbf p$,
$T_{\mathbf p}V(f,a)=T_{\mathbf p}V(g,b)$ and the entire space $T_{\mathbf p}V_\eta$ has a complex structure.

 In the general case when  $\partial f(\mathbf p)$ and $\partial g(\mathbf p)$ are linearly independent at $\mathbf p$,
  an alternative argument  can be given as follows. Intersection variety
$V(f,g;a,b):=f\inv(a)\cap g\inv(b)$ is non-singular at $\mathbf p$ and %we have an obvious
$V(f,g;a,b)\subset V_\eta$ is a smooth complex subvariety of dimension $(n-2)$. Thus the tangent space $T_{\mathbf p}V(f,g;a,b)$ is a $(n-2)$-dimensional complex subspace of $T_{\mathbf p}V_\eta$.
\end{proof}
\subsubsection{Proof of Lemma \ref{af-property}}\label{proofofaf}
Now we are ready to prove Lemma \ref{af-property}.
Assume that $H=f\bar g$ satisfies isolatedness condition (I1),(I2) and the multiplicity condition.
Take $r_0$ as before. 
The hypersurface $V(H)\cap B_{r_0}^{2n}$ has a canonical stratification by 4 complex analytic strata:
\[
V(f)'=V(f)\setminus V(f,g),\, V(g)'=V(g)\setminus V(f,g),\,V(f,g)'=V(f,g)\setminus \{\mathbf 0\},\,
\{\mathbf 0\}.
\]
Consider a sequence of points $p_\nu,\,\nu=1,2,\dots$ such that
 $p_\nu\to p_0\in V(H)\setminus\{\mathbf 0\}$. 
We have to show that the limit  (if it exists)  of the tangent space $T_{p_\nu}V(H,H(p_\nu))$ 
contains the tangent space of the stratum which $p_0$ belongs to.
 Note that
$H$ has no critical point on $V(f)'\cup V(g)'$. Thus the $a_f$-regularity is obvious if $p_0\in V(f)'\cup V(g)'$.
Assume that $p_0\in V(f,g)'$. Then in the neighborhood of $p_0$, $f$ and $g$ have independent gradient
vectors by the assumption (I2).
As the tangent space $T_{p_\nu}V(H,H(p_\nu))$ contains the intersection $T_{p_\nu}V(f,f(p_\nu))\cap T_{p_\nu}V(g,g(p_\nu))$
by Proposition \ref{complex subspace},
the limit  of $T_{p_\nu}V(H,H(p_\nu))$ includes $T_pV(f,g)'$. \qed
%%%%%%%%%%%%%%%%
\subsection{Jacobian curve}
We consider the critical locus of the mapping $(f,g):\mathbb C^n\to \mathbb C^2$:
\[
J(f,g):=\{\mathbf z\in \mathbb C^n\,|\, \partial f(\mathbf z),\,\partial g(\mathbf z)\text{ are linearly dependent}\}
\]
to study   the topology of $V_\eta=H\inv(\eta)\cap B_r^{2n}$.
We call {\em $J(f,g)$ the Jacobian curve of $(f,g)$ or simply the Jacobian curve of $h$.}
We assume that

 {\em  ($\star$): $J$ is a one-dimensional  curve at the origin.}
 
 In the case that $g$ is a linear form, $J$ is usually called a polar curve.
Applying a suitable Morse function $\psi: V_\eta\to \mathbb R_+$ given by a square of the distance  from a generic point near the origin, we get the following assertion.
\begin{Corollary} Under the assumption ($\star$), 
the Milnor fiber $V_\eta $ of $H$ has a homotopy type of at most $n$-dimensional CW-complex.
\end{Corollary}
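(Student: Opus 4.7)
The plan is to apply Morse theory with a generic perturbed squared-distance function, following the hint in the statement. Fix the ball $B_r^{2n}$ of the tubular Milnor fibration, and for a point $\mathbf q \in B_r^{2n}$ close to $\mathbf 0$ set $\psi(\mathbf z) = \|\mathbf z - \mathbf q\|^2$. For generic $\mathbf q$, $\psi|_{V_\eta}$ is a proper Morse function on the compact manifold with boundary $V_\eta \cap \bar B_r^{2n}$, whose finitely many critical points all lie in the interior. Moreover, since by assumption $(\star)$ the Jacobian curve $J$ has complex dimension $1$, the intersection $V_\eta \cap J$ has real dimension $0$, and the set of $\mathbf q$ for which some point of $V_\eta \cap J$ becomes a critical point of $\psi|_{V_\eta}$ is a finite union of real $2$-dimensional affine subspaces of $\mathbb C^n$, hence nowhere dense. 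So we may further assume every critical point $p$ of $\psi|_{V_\eta}$ lies off $J$.

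At such a critical point $p$, the holomorphic gradients $\partial f(p)$ and $\partial g(p)$ are linearly independent. Letting $a = f(p)$ and $b = g(p)$, the variety $N := V(f,g;a,b) = f^{-1}(a) \cap g^{-1}(b)$ is a smooth complex submanifold of $\mathbb C^n$ of complex dimension $n-2$ near $p$, and (by the computation in the proof of Proposition \ref{complex subspace}) is contained in $V_\eta$. Write $W_p := T_p N \subset T_p V_\eta$; this is a complex subspace of real codimension $2$ in $T_p V_\eta$. Since $W_p \subset T_p V_\eta$ and $d\psi$ vanishes on $T_p V_\eta$, the point $p$ is also a critical point of $\psi|_N$, and the Hessian of $\psi|_{V_\eta}$ restricted to $W_p$ coincides with the Hessian of $\psi|_N$ at $p$.

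The function $\psi$ is strictly plurisubharmonic on $\mathbb C^n$, since $\partial\bar\partial \psi = \sum_j dz_j \wedge d\bar z_j$ is positive, so its restriction to the complex submanifold $N$ is strictly plurisubharmonic as well. The classical Andreotti--Frankel argument then bounds the Morse index of $\psi|_N$ at $p$: if $E^-$ is the maximal subspace of $W_p$ on which $\mathrm{Hess}(\psi|_N)(p)$ is negative-definite, then for any nonzero $v \in E^-$ one has $\mathrm{Hess}(\psi|_N)(v,v) + \mathrm{Hess}(\psi|_N)(iv,iv) > 0$, which forces $iv \notin E^-$. Hence $E^- \cap iE^- = \{0\}$, i.e.\ $E^-$ is totally real, and $\dim_{\mathbb R} E^- \leq \dim_{\mathbb C} N = n-2$. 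Combined with the elementary inequality $\mathrm{ind}_-(Q) \leq \mathrm{ind}_-(Q|_W) + \dim_{\mathbb R}(V/W)$ applied to $Q = \mathrm{Hess}(\psi|_{V_\eta})(p)$ on $V = T_p V_\eta$ with $W = W_p$, we conclude that the Morse index of $\psi|_{V_\eta}$ at $p$ is at most $(n-2) + 2 = n$.

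By standard Morse theory (\cite{Milnor}), $V_\eta$ has the homotopy type of a CW-complex of dimension at most $n$, as asserted. The only delicate point is the genericity choice of $\mathbf q$, which must simultaneously make $\psi|_{V_\eta}$ Morse, keep critical points off the finite set $V_\eta \cap J$, and (in the compact-with-boundary model) push critical points off the boundary sphere; all three are standard transversality statements. Once this is arranged, the heart of the argument is the observation that through every point of $V_\eta$ off the Jacobian curve passes a complex submanifold of $V_\eta$ of complex codimension $1$, which lets the Andreotti--Frankel bound handle all but two real directions.
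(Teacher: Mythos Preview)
Your proof is correct and follows essentially the same route as the paper's: use a squared-distance Morse function from a generic nearby basepoint, arrange by genericity that no critical point lies on the finite set $V_\eta\cap J$, and at each remaining critical point invoke the complex $(n-2)$-dimensional submanifold $V(f,g;a,b)\subset V_\eta$ together with the Andreotti--Frankel/Milnor index bound to get index $\le (n-2)+2=n$. You have simply spelled out in more detail the steps the paper cites to \cite{Milnor} and Lemma (2.4.1) of \cite{Okabook}; the one place where your justification is slightly glib is the claim that $V_\eta\cap J$ is finite (it is, since each branch of the curve germ $J$ meets $H^{-1}(\eta)$ in finitely many points for small $\eta\ne 0$), but the paper handles this identically by taking $\eta$ sufficiently small.
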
 
\begin{proof}Consider a Morse function $\psi$ and assume that $\mathbf p\in V_\eta$ is a critical point.
Assume first that  $\partial f(\mathbf p)$ and $\partial g(\mathbf p)$ are linearly independent at $\mathbf p$.
Then $V(f,g;a,b)\subset V_\eta$  is a smooth complex subvariey of dimension  $n-2$. Therefore
the index of of the restriction of  $\psi|V(f,g;a,b)$ is at most $n-2$ by  Milnor type argument
(\cite{Milnor}, Lemma (2.4.1) \cite{Okabook}). Thus the index of $\psi$ on $V_\eta$ is at most $n-2+2=n$. Consider  now the case
that $V_\eta\cap J$ contains some critical point of $\psi$.
By the assumption on $J$, we may assume that $J\cap V_\eta$ is a finite set, taking a sufficiently small $\eta$.
Then we modify $\psi$ a little if necessary so that $\psi$ has no critical point on $J\cap V_\eta$.
Then the assertion follows from the above discussion.
\end{proof}
%%%%%%%%%%%%%%%%%%%%%%%%%%%%%%%%
\subsection{ Relation of the critical curves of $H$ and the Jacobian curve}
Let $H=f\bar g$ as before and let $C(H)$ be the closure of the  critical locus of $H:\mathbb C^n\to \mathbb C$
outside of $V(H)$.
\begin{Lemma}[Lemma 2.4,\cite{Param-Tibar}]
We have the canonical inclusion
$C(H)\subset J(f,g)$.
\end{Lemma}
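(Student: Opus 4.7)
The plan is to apply the mixed critical point characterization in Proposition \ref{mixed critical} directly to $H=f\bar g$ and read off the linear dependence of the holomorphic gradients off $V(H)$; then close up by observing that $J(f,g)$ is closed.

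First I would compute the holomorphic and anti-holomorphic gradients. Since $f$ is holomorphic and $\bar g$ is anti-holomorphic,
\[
\partial H = \bar g\,\partial f,\qquad \bar\partial H = f\,\bar\partial \bar g = f\,\overline{\partial g},
\]
and hence $\overline{\partial H}=g\,\overline{\partial f}$. Now suppose $\mathbf a$ is a critical point of $H$ lying outside $V(H)$, so in particular $f(\mathbf a)\ne 0$ and $g(\mathbf a)\ne 0$. By Proposition \ref{mixed critical}(4), there exists $\alpha\in\mathbb C$ with $|\alpha|=1$ such that
\[
\overline{\partial H}(\mathbf a,\bar{\mathbf a}) = \alpha\,\bar\partial H(\mathbf a,\bar{\mathbf a}),
\]
which, using the formulas above, becomes
\[
g(\mathbf a)\,\overline{\partial f}(\mathbf a) \;=\; \alpha\,f(\mathbf a)\,\overline{\partial g}(\mathbf a).
\]
Taking complex conjugates of both sides and dividing by the nonzero scalar $\bar g(\mathbf a)$,
\[
\partial f(\mathbf a) \;=\; \bar\alpha\,\frac{\bar f(\mathbf a)}{\bar g(\mathbf a)}\,\partial g(\mathbf a),
\]
so $\partial f(\mathbf a)$ and $\partial g(\mathbf a)$ are $\mathbb C$-linearly dependent. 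By definition, $\mathbf a\in J(f,g)$.

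This shows that the critical locus of $H$ outside $V(H)$ is contained in $J(f,g)$. Since $J(f,g)$ is cut out by the vanishing of all $2\times 2$ minors of the $2\times n$ matrix whose rows are $\partial f$ and $\partial g$, it is an algebraic (hence closed) subvariety of $\mathbb C^n$. Taking closures therefore yields $C(H)\subset J(f,g)$, which is the desired inclusion.

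The main obstacle — indeed the only subtle point — is that Proposition \ref{mixed critical} gives only a $|\alpha|=1$ proportionality between the holomorphic and anti-holomorphic gradients of $H$, which a priori is an $\mathbb R$-linear relation. The step that extracts genuine $\mathbb C$-linear dependence of $\partial f$ and $\partial g$ relies crucially on being off $V(H)$, so that $f,g$ are nonzero and can be moved to the other side; this is exactly why $C(H)$ is defined as the closure of the critical locus \emph{outside} $V(H)$ (on $V(f)\cup V(g)$ the argument breaks down, and indeed those points are typically singular points of $H$ that do not lie in $J(f,g)$).
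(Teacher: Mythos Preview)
Your proof is correct and follows essentially the same route as the paper: apply Proposition \ref{mixed critical}(4) to $H=f\bar g$ at a point off $V(H)$, rewrite the resulting relation $g\,\overline{\partial f}=\alpha f\,\overline{\partial g}$ as $\partial f=\bar\alpha\,(\bar f/\bar g)\,\partial g$, and conclude linear dependence of $\partial f,\partial g$. Your explicit closure step (using that $J(f,g)$ is algebraic, hence closed) is a nice addition that the paper leaves implicit.
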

\begin{proof}
Assume that $\mathbf z\in C(H)\setminus V(H)$. By Proposition \ref{mixed critical},
there exists a complex number $\al$ with $|\al|=1$ so that 
$\overline{\partial H(\mathbf z)}=\alpha\bar\partial H(\mathbf z)$. This implies
\[
\overline{\partial f(\mathbf z) }g(\mathbf z)=\alpha f(\mathbf z) \overline{\partial g(\mathbf z)}
\]
which is equivalent to 
$\partial f(\mathbf z)=\partial g(\mathbf z) \beta$ with 
$\beta=\bar{\alpha}
\overline{f(\mathbf z)}/\overline{g(\mathbf z)}$.
Thus $\mathbf z\in J(f,g)$.
\end{proof}
%\begin{Remark} 
$C(H)$ is a real analytic variety and the inclusion $C(H)\subset J(f,g)$ is generically strict. We will see later some examples.
%\end{Remark}
%%%%%%%%%%%%%%%%%%%ritical}
\subsection{Resolution of $H\inv(0)$}
In this section, we assume $H$ satisfies (n1),(n2) and the Newton multiplicity condition.
Consider the Newton boundary $\Ga(h)$  of $h(\mathbf z)=f(\mathbf z)g(\mathbf z)$ which is the same as that of $H(\mathbf z,{\bar{\mathbf z}})=f(\mathbf z)\bar g(\mathbf z)$. Take a regular subdivision $\Si^*$ of 
the dual Newton diagram $\Ga^*(h)$
and consider the associated toric modification $\hat\pi: X\to \mathbb C^n$. See \cite{Okabook} for the definition.
Put $V(f):=f\inv(0)$, $V(g):=g\inv(0)$,
$V(h):=h\inv(0)$
and $V({f,g}):=V(f)\cap V(g)$. We use the same notations as in \S 5, Chapter 3, \cite{Okabook}.
Note that %under the basic assumption (1), (2) in \S 2.1, 
$\hat\pi:X\to \mathbb C^n$ gives a good resolution of 
$f, g$ and $h=fg$.

\begin{Theorem} Let $\widetilde V(f),\widetilde V(g)$ and $ \widetilde V(H)$ be the strict transforms of  $V(f), \,V(g)$ and
 $V(H)=V(h)$ respectively.
Then $\widetilde V(f),\widetilde V(g)$ are non-singular and intersect transversely so that 
$\widetilde V(H)$ is the union   $\widetilde V(f)\cup\widetilde V(g)$ and  $\widetilde V(f)\cap\widetilde  V(g)=\widetilde V({f,g})$.
\end{Theorem}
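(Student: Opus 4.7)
The plan is to reduce to the standard toric resolution theorems for Newton non-degenerate holomorphic functions and complete intersections from Chapter III of \cite{Okabook}, using the Newton multiplicity condition to separate the two strict transforms.

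First I would note that, since $\Ga(h) = \Ga(fg)$ is obtained from $\Ga(f)$ and $\Ga(g)$, the dual Newton diagram $\Ga^*(h)$ is a common refinement of $\Ga^*(f)$ and $\Ga^*(g)$; hence the regular simplicial subdivision $\Si^*$ of $\Ga^*(h)$ is simultaneously admissible for $f$, $g$, and $h$. Applying the toric resolution theorem of \cite{Okabook} separately to the Newton non-degenerate convenient functions $f$ and $g$, I obtain that $\widetilde V(f)$ and $\widetilde V(g)$ are each non-singular and cross every exceptional divisor normally.

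Next I would verify transversality of $\widetilde V(f) \cap \widetilde V(g)$ on each toric chart $U_\sigma$ with coordinates $(u_1,\dots,u_n)$. Writing
\[
\hat\pi^* f = \mathbf{u}^{\al_\sigma}\, F(\mathbf u),\qquad \hat\pi^* g = \mathbf{u}^{\be_\sigma}\, G(\mathbf u),
\]
the strict transforms in $U_\sigma$ are the zero loci of $F$ and $G$. For a point $p \in \widetilde V(f) \cap \widetilde V(g) \cap U_\sigma$, let $J$ index the exceptional divisors through $p$; for each $j\in J$, the strictly positive weight vector $P_j$ of the corresponding divisor makes the leading terms of $F$ and $G$ along the stratum essentially equal (up to local units) to the face functions $f_{P_j}$ and $g_{P_j}$. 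The Newton non-degeneracy of the intersection variety $V(f,g)$ (condition (n2)) then guarantees that these face functions define a non-singular complete intersection on the torus, so $dF(p)$ and $dG(p)$ are linearly independent transversely to the stratum. This yields the desired transversality.

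Finally, the set-theoretic identities $V(H) = V(h) = V(f) \cup V(g)$ and $V(f) \cap V(g) = V(f,g)$ lift directly to $\widetilde V(H) = \widetilde V(f) \cup \widetilde V(g)$ and $\widetilde V(f) \cap \widetilde V(g) = \widetilde V(f,g)$ once smoothness and transversality are established. I expect the main obstacle to be the transversality step: one has to carefully translate between the face-function description of non-degenerate complete intersections (in the sense of \cite{Okabook}) and the local toric description of the strict transforms, and at the same time invoke the Newton multiplicity condition $\al_\sigma \ne \be_\sigma$ (componentwise, along the shared exceptional divisors) to rule out accidental coincidence of the two strict transforms as irreducible components.
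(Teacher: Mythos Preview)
The paper does not supply a proof of this theorem; it simply records it as a consequence of the standard resolution theory for Newton non-degenerate functions and complete intersections developed in \cite{Okabook}, after noting that the chosen regular fan $\Si^*$ (subdividing $\Ga^*(h)=\Ga^*(fg)$) gives a good resolution of $f$, $g$, and $h$ simultaneously. Your outline is exactly the unpacking of that citation: (n1) gives smoothness of each strict transform, and (n2) gives transversality of their intersection, both via the local toric chart computation you describe. So the approach is the same as the paper's (implicit) one.

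One correction, however: your final sentence about invoking the Newton multiplicity condition $\al_\si\ne\be_\si$ ``to rule out accidental coincidence of the two strict transforms as irreducible components'' is misplaced. The Newton multiplicity condition compares the exponents of the \emph{exceptional} monomial factors $\mathbf u^{\al_\si}$ and $\mathbf u^{\be_\si}$, not the strict transforms $F=0$ and $G=0$. What actually prevents $\widetilde V(f)$ and $\widetilde V(g)$ from sharing a component is simply that $f$ and $g$ have no common factor, which is already contained in the complete intersection hypothesis (n2) (or (I2)). The Newton multiplicity condition plays no role in this theorem; it enters only in the subsequent zeta-function computation (Theorem~\ref{mix-zeta}), where one needs $d(P;f)-d(P;g)\ne 0$ so that the pulled-back function has a well-defined nonzero polar degree along each exceptional divisor. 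You should drop that clause from your argument.
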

%%%%%%%%%%%%%%%%%%%%
\subsubsection{Holomorphic product case}
Let $h(\mathbf z)=f(\mathbf z)g(\mathbf z)$ and  let $\Si^*$ be as above. Put  $\mathcal  V^+$ be the set of vertices $P$ of $\Si^*$ which are strictly positive. We may  assume that vertices  which are not strictly positive are  the standard basis $\{E_1,\dots, E_n\}$.
%such that $\dim\,\De(P,h)=n-1$. 
(Recall that vertices are the primitive generator of 1-dimensional cone of $\Si^*$ (\cite{Okabook}.)
$\mathcal S_I$ be the set of the weight vectors $P$  in $\mathbb Q^I$  such that 
%which is a
%covector in $ \Si^*$ and 
$\dim\, \Delta(P;h)=|I|-1$. %Put $\mathcal S=S_{\{1,\dots,n\}}$.
The Milnor fiber is not simply connected but the zeta function $\zeta_{h}(t)$  of the Milnor fibration of $h=fg$ can be computed in the exact same way as  (5.3.3), \cite{Okabook} using A'Campo formula \cite{AC}.
This also gives a formula for the zeta function of the monodromy of the Milnor fibration.
\begin{Theorem}
\begin{eqnarray}
\zeta_h(t)=\prod_{P\in \mathcal V^+}(1-t^{d(P;h)})^{-\chi(E'(P))}
\end{eqnarray}
where
\[
E'(P)=\left(\hat E(P)\setminus\left( \widetilde V(h)\cup\bigcup_{Q\in \mathcal V^+,Q\ne P}\hat E(Q)  \right)\right)\cap {\hat\pi}\inv(0)
\]
For the calculation of $\chi(E'(P))$, we can use the toric stratification as in Theorem (5.3).
\begin{eqnarray}
\zeta_{h}(t)=\prod_{I}\zeta_I(t),\quad \zeta_I(t)=\prod_{P\in \mathcal S_I}(1-t^{d(P;h^I)})^{-\chi(E'(P))}
\end{eqnarray}
where 
$\chi(E'(P))=\chi(E'(P;h^I))$ and it can be computed combinatorially using Newton boundary when
$\widetilde V({h^I})$ has 
 no singularities in $\hat E(P)$, that is if $\widetilde V(f^I)\cap\widetilde V(g^I)\cap \hat E(P)= \emptyset$.
\end{Theorem}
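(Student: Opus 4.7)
The plan is to apply A'Campo's formula \cite{AC} to the toric resolution $\hat{\pi}: X \to \mathbb{C}^n$ associated with the regular subdivision $\Sigma^*$ of $\Gamma^*(h)$. By the preceding theorem, $\hat{\pi}$ is simultaneously a good embedded resolution of $V(f)$, $V(g)$, and $V(h) = V(f) \cup V(g)$; the strict transforms $\widetilde{V}(f), \widetilde{V}(g)$ are smooth and intersect transversely, so $\hat{\pi}^* h$ is a normal crossings divisor
\[
\hat{\pi}^* h \;=\; \sum_{P \in \mathcal V^+} d(P;h)\,\hat E(P) \;+\; \widetilde V(f) \;+\; \widetilde V(g),
\]
where the multiplicity $d(P;h) = d(P;f) + d(P;g)$ along the exceptional divisor $\hat E(P)$ is the standard toric computation from \S5, Chapter~3 of \cite{Okabook}.

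First I would invoke A'Campo's formula directly, noting that only exceptional divisors lying over $0$ produce factors of the local zeta function. These are precisely the $\hat E(P)$ with $P \in \mathcal V^+$; the open stratum $\hat E(P) \setminus (\widetilde V(h) \cup \bigcup_{Q \neq P} \hat E(Q))$ intersected with $\hat{\pi}^{-1}(0)$ is exactly $E'(P)$ as defined in the theorem. The strict transforms $\widetilde V(f), \widetilde V(g)$ do not contribute independent factors because they are not contained in $\hat{\pi}^{-1}(0)$; they enter only by carving out part of each $E'(P)$. This yields the first formula.

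For the factorization $\zeta_h(t) = \prod_I \zeta_I(t)$, I would refine the computation using the canonical toric stratification $X = \bigsqcup_I X_I$ indexed by subsets $I \subseteq \{1, \dots, n\}$. Each $\hat E(P)$ decomposes as a disjoint union $\bigsqcup_I (\hat E(P) \cap X_I)$, and by additivity of Euler characteristic on a stratification the A'Campo product may be regrouped by the index $I$ of the minimal stratum supporting $P$. Within a fixed $I$ the relevant restriction is $h^I$, the contributing vertices are exactly those $P \in \mathcal S_I$ with $\dim \Delta(P; h) = |I|-1$, and $d(P;h) = d(P; h^I)$. When moreover $\widetilde V(f^I) \cap \widetilde V(g^I) \cap \hat E(P) = \emptyset$, the locus $E'(P)$ reduces to a smooth open toric subvariety minus two disjoint smooth divisors, so $\chi(E'(P))$ is computable purely from the Newton boundary of $h^I$ by the same lattice-point bookkeeping as in Theorem~5.3 of \cite{Okabook}.

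The main obstacle is verifying that the argument of (5.3.3) in \cite{Okabook}, originally written for a single holomorphic function, goes through verbatim when $f$ and $g$ are a priori distinct. This is secured by the Newton multiplicity condition, which by the earlier proposition forces $\Gamma(f) \cap \Gamma(g) = \emptyset$: hence for each strictly positive weight vector $P$ the face $\Delta(P; h)$ lies entirely on $\Gamma(f)$ or entirely on $\Gamma(g)$, so the multiplicities $d(P;f)$ and $d(P;g)$ are separated by the dual fan and combine cleanly into $d(P;h)$ along $\hat E(P)$. Once this cleanness is in hand, the A'Campo computation proceeds term by term exactly as in the classical purely holomorphic case, and no input from the mixed-function theory of the earlier sections is required — the theorem is, after all, a statement about the zeta function of the holomorphic function $h = fg$.
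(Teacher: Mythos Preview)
Your first two paragraphs are correct and match the paper's approach exactly: the paper itself gives no detailed proof, merely stating that the computation proceeds ``in the exact same way as (5.3.3), \cite{Okabook} using A'Campo formula \cite{AC}.'' Your identification of the multiplicities $d(P;h)=d(P;f)+d(P;g)$, of the relevant strata $E'(P)$, and of the toric stratification over $I$ is all accurate.

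Your final paragraph, however, is both unnecessary and incorrect. First, the Newton multiplicity condition plays no role in this theorem: $h=fg$ is an ordinary holomorphic function, its Milnor fibration exists unconditionally, and A'Campo's formula requires only a good resolution, which $\hat\pi$ provides by the preceding theorem using (n1) and (n2) alone. The multiplicity condition enters only in the \emph{next} theorem, on $H=f\bar g$, where it ensures that the polar degree $d(P;f)-d(P;g)$ is nonzero. Second, your claim that ``the face $\Delta(P;h)$ lies entirely on $\Gamma(f)$ or entirely on $\Gamma(g)$'' is false: since $\Gamma_+(h)=\Gamma_+(f)+\Gamma_+(g)$ is a Minkowski sum, one has $\Delta(P;h)=\Delta(P;f)+\Delta(P;g)$, which lies on $\Gamma(h)$ and has no reason to sit on either summand boundary. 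The only genuine subtlety---that $h$ itself need not be Newton non-degenerate because $h_P=f_Pg_P$ can be singular on $V(f_P,g_P)\cap(\mathbb C^*)^n$---is already handled by the preceding resolution theorem, and you correctly note that (n2) is what makes the strict transforms meet transversely. Simply delete the last paragraph and the proof stands.
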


Here $\hat E(P)$ is the exceptional divisor corresponding to the weight vector $P$ as in \cite{Okabook}.
$d(P;h)$ is the minimal value of the linear function associated with $P$ on the Newton boundary.
%For $E'(P)$ such that
If $E(P):=\hat E(P)\cap \tilde V(h)$ has a singularity i.e.,  if there are components of $V(f)$ and $V(g)$ which are intersecting on $\hat E(P)$, $\chi(E'(P))$ can not computed combinatorially.
We can use additive formula of the Euler characteristic using the decomposition
\[
E(P)=E(P;f)\cup E(P;g),\,E(P;f)\cap E(P;g)=E(P;f,g).
\]
The last one can be computed using Minkowski's mix-volume (\cite{Okabook}).

\subsubsection{Mixed product $f\bar g$ case} For mixed product case $H=f\bar g$, we need the Newton-multiplicity condition for $H$. Then
  the zeta function  $\zeta_H(t)$ of the Milnor fibration of $H=f\bar g$ is given as 
\begin{Theorem}\label{mix-zeta}
\begin{eqnarray}
\zeta_H(t)=\prod_{P\in \mathcal V^+}(1-t^{\pdeg(P;H)})^{-\chi(E'(P))}
\end{eqnarray}
where $\pdeg(P;H)$ is the polar degree, i.e. $\pdeg\,(P,H)=d(P;f)-d(P;g)$.
%One can also compute as 
For the calculation of $\chi(E'(P))$, we can use the toric stratification as in Theorem (5.3).
\begin{eqnarray}
\zeta_{H}(t)=\prod_{I}\zeta_I(t),\quad \zeta_I(t)=\prod_{P\in \mathcal S_I}(1-t^{\pdeg(P;H^I)})^{-\chi(E'(P))}
\end{eqnarray}
where 
$\chi(E'(P))=\chi(E'(P;H^I))$ and  it can be computed using Newton boundary if
$\widetilde V(h^I)$ has 
 no singularities in $\hat E(P)$.
\end{Theorem}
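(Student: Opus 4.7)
The plan is to run the A'Campo argument already used in Theorem (5.3.3) of \cite{Okabook} for the holomorphic case, but with the polar degree $\pdeg(P;H) = d(P;f) - d(P;g)$ replacing the ordinary weighted degree $d(P;h)$. The Newton non-degeneracy hypotheses (n1), (n2) guarantee that the admissible toric modification $\hat\pi: X \to \mathbb{C}^n$ associated to a regular simplicial subdivision $\Sigma^*$ of $\Gamma^*(h)$ is a good resolution for $f$, for $g$, and for $h = fg$ simultaneously, so that $\widetilde V(f)$ and $\widetilde V(g)$ are smooth and cross $\hat E(P)$ transversely (cf.\ the theorem on resolution of $H^{-1}(0)$ just proved). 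This also verifies the hypothesis of the tubular Milnor fibration theorem (Main Theorem \ref{tubularMilnor}), so $H$ has a well-defined Milnor fibration whose zeta function we want to compute.

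The main local computation is this: at a generic point $\mathbf{p}$ of $\hat E(P)$, pick holomorphic coordinates $(u_1,\ldots,u_n)$ with $u_1=0$ defining $\hat E(P)$. By the definition of $d(P;f)$ and $d(P;g)$ and Newton non-degeneracy,
\[
\hat\pi^* f = u_1^{d(P;f)}\, k_f, \qquad \hat\pi^* g = u_1^{d(P;g)}\, k_g
\]
with $k_f, k_g$ holomorphic units at $\mathbf{p}$, so
\[
\hat\pi^* H = u_1^{d(P;f)}\, \bar u_1^{d(P;g)}\, k_f\, \bar k_g.
\]
The Newton multiplicity condition $d(P;f)\ne d(P;g)$ makes the map $u_1 \mapsto u_1^{d(P;f)} \bar u_1^{d(P;g)}$ a locally trivial $S^1$-bundle from a small punctured disk onto a small punctured disk of degree $\pdeg(P;H) = d(P;f) - d(P;g)$ (since $u_1 = re^{i\theta}$ gives $u_1^a \bar u_1^b = r^{a+b} e^{i(a-b)\theta}$). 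Hence the monodromy of $H$ around a small loop in $\hat E(P)$ transverse to the divisor acts by rotation of order dividing $\pdeg(P;H)$, which is exactly the input needed for A'Campo's formula.

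With this local model established, the A'Campo argument goes through unchanged: the Milnor fiber of $H$ is obtained by pulling back via $\hat\pi$, and the zeta function decomposes as a product over the strictly positive vertices $P\in\mathcal{V}^+$. The contribution of each $\hat E(P)$ to $\zeta_H(t)$ is $(1-t^{\pdeg(P;H)})^{-\chi(E'(P))}$, where $E'(P)$ is the open stratum of $\hat E(P)$ obtained by removing the strict transform $\widetilde V(H)$ and the other exceptional divisors, exactly as in the holomorphic case. Summing multiplicatively yields the first formula; the second formula then follows by further partitioning $\mathcal V^+$ according to the toric stratification indexed by subsets $I\subset\{1,\ldots,n\}$, as in Theorem (5.3), \cite{Okabook}.

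The main obstacle is justifying that A'Campo's formula, which is stated for holomorphic functions, extends to this mixed monomial setting. The difficulty is concentrated at points of $\hat E(P)$ where $\widetilde V(f)$ and $\widetilde V(g)$ meet, i.e.\ on $\widetilde V(f,g)\cap \hat E(P)$, because there the local model becomes a product $u_1^{a_1}\bar u_1^{b_1} u_2^{a_2}\bar u_2^{b_2}\cdots$ and one must check that the combinatorial Euler-characteristic count of $E'(P)$ still captures the correct monodromy. This is handled precisely as in the holomorphic version: away from the crossings the fiber is a trivial circle bundle whose Euler integral contributes combinatorially to $\chi(E'(P))$, and the crossings contribute $0$ to the Euler characteristic of the fiber because the relevant tori have vanishing Euler characteristic. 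When $\widetilde V(f)\cap\widetilde V(g)\cap \hat E(P)=\emptyset$, $\chi(E'(P))$ is computable directly from the Newton boundary by the formulas in \cite{Okabook}; in the remaining case one uses the additive decomposition $E(P)=E(P;f)\cup E(P;g)$ together with Minkowski mixed-volume as in the holomorphic-product case above.
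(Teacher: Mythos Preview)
Your proposal is correct and follows essentially the same approach as the paper: use the admissible toric modification $\hat\pi$, compute the local form $\hat\pi^*H = u_1^{d(P;f)}\bar u_1^{d(P;g)}\,(\text{unit})$, identify the local monodromy as rotation by the polar degree, and feed this into A'Campo's formula exactly as in the holomorphic case. The paper itself is extremely terse here---it gives no proof beyond the one-line reference ``The calculation of the Euler number $\chi(E'(P))$ is the same as that of Theorem 11, \cite{OkaStrong}''---and relies on the reader to transplant the argument from \cite{OkaStrong} and \cite{Okabook}; your write-up simply makes that transplant explicit, including the key Lemma 12 of \cite{OkaStrong} (the local topological equivalence of $u^a\bar u^b$ with rotation of degree $a-b$), which the paper invokes only later in the plane-curve section.
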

The caculation of the Euler number $\chi(E'(P))$ is the same as that of Theorem 11, \cite{OkaStrong}.

%%%%%%
\begin{Example} Let $f(\bfz)=z_1^2+z_2^2+z_3^2$ and $g(\bfz)=z_1+z_2+z_3$. As a regular fun $\Si^*$, we
can simply take $\{E_1,E_2,E_3,P\}$ with $P=(1,1,1)$.
$E_1,\,E_2,\,E_3$ are standard basis of $\mathbb Z^3\subset \mathbb Q^3$. The corresponding toric modification is nothing but the ordinary blowing-up at the origin. 
$\hat E(P)$ is the projective space $\mathbb P^2$. Consider the chart $\Cone(P,E_2,E_3)$ with coordinates 
$(u_1,u_2,u_3)$.
As $(z_1,z_2,z_3)=(u_1,u_1u_2,u_1u_3)$, $E(P;f)=\widetilde V(f)\cap \hat E(P)$ is the conic defined by 
$1+u_2^2+u_3^2=0$ and $\hat E(P;\bar g)$ is a line defined by $1+\bar u_2+\bar u_3=0$. The pull-back of the functions are
\[\begin{split}
&\pi^*f(\mathbf u)=u_1^2(1+u_2^2+u_3^2),\\
&\pi^*\bar g(\mathbf u)=\bar u_1(1+\bar u_2+\bar u_3).
\end{split}
\]
$E(P;f)^*\cap E(P;\bar g)^*=\{(0,u_2,-1-u_2)|1+u_2^2+(-1-u_2)^2=0\}$ (2 points).
$E(P;f)$ is a conic and $E(P;\bar g)$ is a projective  line $(\text{equal to}\, 1+u_2+u_3=0)$. Thus $\chi(E(P;H))=2+2-2=2$ and 
\[\chi(E'(P;H))=\chi(\mathbb P^2)-\chi(E(P;f)\cup E(P;\bar g))
=3-2=1.\]
Thus by Theorem \ref{mix-zeta}, $\zeta_H(t)=(1-t)^{-1}$.
We know that 
\[\zeta_H(t)=\frac{P_1(t)P_3(t)}{P_0(t)P_2(t)}\]
where $P_i(t)$ is the i-th characteristic polynomial and $P_0(t)=1-t$,
$P_1(t)=1-t$ by Proposition 9.
As $H$ is a mixed homogeneous polynomial of polar degree 1, the monodromy is trivial.  $H$ defines a projective curve $C$ which is defined by $f g=0$ and
the spherical Milnor fiber is diffeomorphic to $\mathbb P^2-C$ (\cite{MC}).
 Thus $P_2(t)=(1-t)$ and $P_3(t)=1$. That is, $H_1(F)=\mathbb Z$ and $H_2(F)=\mathbb Z$
where $F$ is the Milnor fiber. 
%Here we use the fact that spherical Milnor fiber is diffeomorphic to the tubular Milnor fiber.
\end{Example}

%%%%%%%%%%%%%%%%%%
\section{ Plane curves}
In this section, we consider plane curves.
We assume (n1),(n2) and  the Newton multiplicity condition in this chapter.
% and will explain the geometric meaning of Newton multiplicity condition in this case.
Assume that  $C_f: f(x,y)=0$ and $C_g: g(x,y)=0$ are plane curves defined by  holomorphic functions $f,g$ which have convenient  non-degenerate Newton boundaries.  We  note that $h=fg$ is also Newton non-degenerate, as $C_f$ and $C_g$ do not intersect outside of the origin. This follows from the non-degeneracy assumption (n2)  of $f=g=0$.  The Newton non-degeneracy of $f=g=0$ is equivalent to the following. For any weight vector $P$ such that $\De(P;f)$ and $\De(P;g)$ are simultaneously edges of $\Ga(f)$ and $\Ga(g)$,
the face functions $f_P$ and $g_P$ do not have any common non-nomomial factor in $\mathbb C[x,y]$.
Let $\{P_1,\dots, P_r\}$ be the weight vectors corresponding to 1-faces of  $\Ga(f)$ and let $\{Q_1,\dots, Q_s\}$ be those corresponding to 1-faces of $\Ga(g)$.
Let us consider a toric modification associated with a regular fan
with weight vectors 
$\{E_1,R_1,\dots, R_a,E_2\}$  with $E_1=(1,0), E_2=(0,1)$ which is a subdivision of the union $\{P_1,\dots,P_r\}\cup \{Q_1,\dots, Q_s\}$
and let $\hat\pi:X\to\mathbb C^2$ be the corresponding toric modification.
\subsection{A'Campo's formula}
First applying  the formula by A'Campo to the resolution $\hat\pi: X\to\mathbb C^2$, 
the zeta functions $\zeta_f(\tau)$, $\zeta_g(\tau)$  and $\zeta_h(\tau)$ of $f, g$ and $h$ respectively are given as follows.
\[\begin{split}
\zeta_f(\tau)&=(1-\tau^{a_x})(1-\tau^{a_y})\prod_{j=1}^a (1-\tau^{d(R_j;f)})^{\ell_j}\\
\zeta_g(\tau)&=(1-\tau^{b_x})(1-\tau^{b_y})\prod_{j=1}^a (1-\tau^{d(R_j;g)})^{m_j}\\
\zeta_h(\tau)&=(1-\tau^{a_x+b_x})(1-\tau^{a_y+b_y})\prod_{j=1}^a (1-\tau^{d(R_j;f)+d(R_j;g)})^{\ell_j+m_j}
\end{split}
\]
where $\ell_j$  (resp. $m_j)$) is the number of irreducible factors of $f_{R_j}(x,y)$ (resp. of $g_{R_j}(x,y)$)
and $a_x, a_y$ (resp. $b_x, b_y$) are the length of $x$-axis and $y$-axis cut by $\Ga(f)$ (resp. of $\Ga(g)$).
Note that  $\ell_j=0$ or $m_j=0$  if $\dim\,\De(R_j;f)=0$ or $\dim\,\De(R_j;g)=0$ respectively.
Geometrically, $\ell_j$ and $m_j$ are the number of irreducible components of the strict transforms of $C_f$ and $C_g$ which intersect the exceptional divisor
$\hat E(R_j)$. Here we use the same notation as in \cite{Okabook}.
Note that $d(P_j;f)\ell_j$ (resp. $d(Q_k;g)m_k$)  is equal to $2\Vol\,\Cone(\De(P_j;f),\mathbf 0)$
(resp. $2\Vol\, \Cone(\De(Q_k;g),\mathbf 0)$).
%Recall that 
%\[\Cone(\De(Q_k;g),\mathbf 0)=\{\mathbf x\in \mathbb R^2\,|\, \exists \mathbf \nu\in \De(Q_k;g),\,\mathbf x=\tau \mathbf \nu,\exists \tau, 0\le \tau\le 1\}.\]
The Milnor numbers of $f$ and $g$ are given by
$-\deg\,\zeta_f(\tau)+1$ and $-\deg\,\zeta_g(\tau)+1$ respectively and they are equal to the Newton numbers of $\Ga_-(f)$ and $\Ga_-(g)$ respectively (\cite{Kouchnirenko}).

Now we consider the mixed function 
$H(\mathbf z,\bar{\mathbf z}):=f(\mathbf z)\bar g({\mathbf z})$.
Consider a toric chart
$\Cone (R_j,R_{j+1})$ with coordinate chart $(u,v)$ where $u=0$  (respectively $v=0$) defines the exceptional divisor  $\hat E(R_j)$ (resp. $\hat E(R_{j+1})$)
in the notation of \S 4, Chapter 3, \cite{Okabook}. The pull back of $f,g,H$ takes the form
\[\begin{split}
&{\hat\pi}^*f=u^{d(R_j;f)} f'(u,v),\,f'(0,v)\ne 0\\
&{\hat\pi}^*g=u^{d(R_j;g)} g'(u,v),\,g'(0,v)\ne 0\\
&\pi^*H(u,v,\bar u,\bar v)=u^{d(R_j;f)}{\bar u}^{d(R_j;g)}  H'(u,v,\bar u,\bar v)
%&\text{where}\quad H'(u,v,\bar u,\bar v)=f'(u,v)\overline{g'}(\bar u,\bar v).
\end{split}
\]
where  $H'(u,v,\bar u,\bar v)=f'(u,v)\overline{g'}(\bar u,\bar v)$.
Note that
$ H'$ is non-zero on $\hat E(R_j)\setminus V(f,g)$. Thus if $d(R_j;f)\ne d(R_j;g)$,
$\pi^*h$ is locally topologically equivalent to the rotation around the axis $\hat E(R_j)$ by the monomial $u^{d(R_j;f)-d(R_j;g)}$.  See Lemma 12, \cite{OkaStrong}. Using the same argument as in \cite{Okabook}, we get
\begin{Theorem} [Lemma 4.2 \cite{Pichon-Seade1}, Theorem 5.4 \cite{JM}] Assume that $f,g$ are non-degenerate holomorphic functions as above and assume that they satisfy Newton multiplicity condition.
Then $H(\mathbf z,\bar{\mathbf z})$ has a Milnor fibration and the zeta function $\zeta_H(\tau)$ is given as 
\[ %\begin{split}
\zeta_H(\tau)
%&\begin{cases}
=(1-\tau^{a_x-b_x})(1-\tau^{a_y-b_y})\prod_{j=1}^a\left(1-\tau^{d(R_j;f)-d(R_j;g)}\right )^{\ell_j+m_j}.
\]
assuming $\Ga_-(f)\supset \Ga(g)$. If $\Ga_-(f)\subset \Ga(g)$, the formula is changed as
\[ %\begin{split}
\zeta_H(t)=(1-\tau^{b_x-a_x})(1-\tau^{b_y-a_y})\prod_{j=1}^a\left(1-\tau^{d(R_j;g)-d(R_j;f)}\right)^{\ell_j+m_j}.
\]
\end{Theorem}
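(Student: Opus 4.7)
The plan is to first derive the existence of the Milnor fibration from the main results already proved in the paper, and then to compute $\zeta_H(\tau)$ by adapting A'Campo's formula to the mixed setting through the toric modification $\hat\pi$.

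\textbf{Existence of the fibration.} Under (n1), (n2) and the Newton multiplicity condition, the Proposition stated above shows that $H=f\bar g$ satisfies the isolatedness assumptions (I1), (I2) and the multiplicity condition, with $\hat\pi:X\to\mathbb C^2$ itself serving as the required good resolution. Hence the Main Theorem supplies the tubular Milnor fibration and Corollary \ref{sphericalMilnor} supplies the spherical one, giving the first conclusion of the theorem.

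\textbf{Local model on each toric chart.} On the chart $\Cone(R_j,R_{j+1})$ with coordinates $(u,v)$, we already recorded the factorization
\[
\hat\pi^* H(u,v,\bar u,\bar v)\;=\;u^{d(R_j;f)}\,\bar u^{d(R_j;g)}\,H'(u,v,\bar u,\bar v),
\]
where $H'=f'(u,v)\overline{g'(u,v)}$ is non-vanishing on $\hat E(R_j)$ outside the strict transform of $V(f,g)$. By the Newton multiplicity condition (equivalently $\Ga(f)\cap\Ga(g)=\emptyset$), one has $d(R_j;f)\ne d(R_j;g)$ for every strictly positive weight vector $R_j$ together with $a_x\ne b_x$ and $a_y\ne b_y$ on the coordinate axes.

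\textbf{Reduction to a rotation and A'Campo's formula.} Invoking Lemma 12 of \cite{OkaStrong}, on a small tubular neighborhood of each $\hat E(R_j)$ away from the transverse locus $\widetilde V(f)\cap\widetilde V(g)$ (transversality coming from (n2)), the mixed monomial $u^{d(R_j;f)}\bar u^{d(R_j;g)}$ is topologically equivalent to the holomorphic rotation $u^{\pdeg(R_j;H)}$ with $\pdeg(R_j;H):=d(R_j;f)-d(R_j;g)$. A'Campo's argument \cite{AC} then goes through with these polar degrees replacing the usual holomorphic multiplicities, yielding
\[
\zeta_H(\tau)\;=\;\prod_E\bigl(1-\tau^{\pdeg(E;H)}\bigr)^{-\chi(E^\circ)},
\]
where $E$ ranges over the divisors of $\hat\pi^{-1}(V(H))$ and $E^\circ=E\setminus\bigcup_{E'\ne E}E'$. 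The Euler characteristics $\chi(E^\circ)$ coincide with those computed for $h=fg$ (the underlying divisor configuration in $X$ is unchanged): each coordinate-axis strict transform contributes coefficient $1$; each $\hat E(R_j)\cong\mathbb P^1$ contributes $\chi(E^\circ)=2-2-(\ell_j+m_j)=-(\ell_j+m_j)$ after removing the two intersection points with the neighboring exceptional divisors and the $\ell_j+m_j$ intersection points with $\widetilde V(f)\cup\widetilde V(g)$. In the case $\Ga_-(f)\supset\Ga(g)$ every polar degree is positive and the stated formula drops out; in the reverse case all polar degrees change sign, which one absorbs by flipping the three families of exponents $a_x-b_x$, $a_y-b_y$, $d(R_j;f)-d(R_j;g)$.

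\textbf{Expected main obstacle.} The delicate point is justifying the topological triviality / rotation model for $\hat\pi^*H$ on a neighborhood of each $\hat E(R_j)$ \emph{including} the finitely many points where $\widetilde V(f)$ and $\widetilde V(g)$ meet the exceptional divisor. This is precisely where the Newton multiplicity condition (separating the rotation exponents $d(R_j;f)$ and $d(R_j;g)$) combines with the transverse intersection $\widetilde V(f)\cap\widetilde V(g)$ supplied by (n2) to produce a locally trivial mixed-rotation model. Once this local model is in place, the Euler-characteristic bookkeeping and the final product are the routine A'Campo computation.
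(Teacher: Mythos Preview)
Your proposal is correct and follows essentially the same route as the paper: both derive existence from the Newton multiplicity condition via the earlier results, pull back $H$ through the toric modification $\hat\pi$, invoke Lemma~12 of \cite{OkaStrong} to replace the mixed monomial $u^{d(R_j;f)}\bar u^{d(R_j;g)}$ by the rotation $u^{d(R_j;f)-d(R_j;g)}$, and then run the A'Campo/Varchenko computation exactly as for $h=fg$ in \cite{Okabook}, with polar degrees substituted for ordinary multiplicities.

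One small remark: the ``expected main obstacle'' you flag---controlling the model near $\widetilde V(f)\cap\widetilde V(g)$---is in fact empty here, since for plane curves condition (n2) forces $V(f,g)=\{\mathbf 0\}$ and hence $\widetilde V(f)\cap\widetilde V(g)=\emptyset$; the only crossings on each $\hat E(R_j)$ are the $\ell_j+m_j$ transverse intersections with the separate strict transforms, which your Euler-characteristic count already handles correctly.
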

\begin{Remark} Pichon and Seade have done   interesting works for $H=f\bar g$ with $n=2$ which is   not necessarily non-degenerate from Seifert graph point of view in \cite{Pichon-Seade0}. The formula of the zeta function is also obtained by Fernandez de Bobadilla and Menegon Neto
%Theorem 5.4 \cite{JM} 
from the boundary of the Milnor fibre point of view.
\end{Remark} 
%%%
\section{Non-existence of Milnor fibration}
Let $f(\mathbf z,\bar{\mathbf z})$ be a given mixed function with $f(\mathbf 0)=0$. Here we consider again  in general dimension $n$.
For the existence of tubular Milnor fibration in the ball $B_r^{2n}$ and the independence of the isomorphism class of the fibration
by the choice of $r$, we use the following {\em $d$-regularity in \cite{CSS} or {\em  $\rho$-regularity}  condition (\cite{SCT1,SCT2,ART1}). \nl
{\em  For any fixed $r',\,0<r'\le r$, there exists a positive number $\de$ such that  for any $\eta\ne 0 ,\, |\eta|\le \de$
 the fiber $f\inv (\eta)$ is non-singular in $B_r^{2n}$ and intersects transversely with
 the sphere $S_\ga^{2n-1}$ for any $\ga,\,r'\le \ga\le r$.}
 
 %The condition is called {\em  $\rho$-regularity}  (\cite{SCT1,SCT2,ART1}).
\subsection{Non-constant critical curve}
Consider a real curve $\si:[0,1]\to \mathbb C^n$  with $\si(0)=\mathbf 0$ such that 
any point  $\si(t)$, $0\le t\le 1$ is a critical point of $f$.
We say that $\si$ is {\em a non-constant critical curve for $f$} if $\si([0,1])\not \subset f\inv(0)$.
Namely the value of $f$ is not constantly zero along $\si$.
An obvious observation is:
\begin{Proposition}
Assume that $f$ has a non-constant critical curve. Then $f$ has no
tubular Milnor fibration.
\end{Proposition}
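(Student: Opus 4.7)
The plan is to argue by contradiction, using the fact that a locally trivial fibration $f \colon E(r,\delta)^* \to D_\delta^*$ must in particular be a submersion, so its total space cannot contain a critical point of $f$.

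Suppose $f$ admits a tubular Milnor fibration. Then by definition there exist $r>0$ and $\delta>0$ with
\[
E(r,\delta)^* := \{\mathbf z \in B_r^{2n} : 0 < |f(\mathbf z)| \le \delta\}
\]
such that $f \colon E(r,\delta)^* \to D_\delta^*$ is a locally trivial fibration. Any locally trivial fibration between smooth manifolds is in particular a submersion, so $f$ has no critical points in $E(r,\delta)^*$.

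I would now use the non-constant critical curve $\sigma \colon [0,1]\to\mathbb C^n$ to produce a critical point of $f$ inside $E(r,\delta)^*$, contradicting the previous sentence. First, since $\sigma$ is continuous with $\sigma(0)=\mathbf 0$, we have $\sigma(t)\in B_r^{2n}$ for all sufficiently small $t>0$; and since $f$ is continuous and $f(\mathbf 0)=0$, we also have $|f(\sigma(t))|\le \delta$ for all sufficiently small $t>0$. Second, the standing assumption that $\sigma([0,1])\not\subset f^{-1}(0)$, combined with the real-analyticity of $\sigma$ (such curves arise from the Curve Selection Lemma and so may be taken real analytic, and $f$ is itself real analytic in $(\mathbf z,\bar{\mathbf z})$), implies that $f\circ\sigma$ is a real analytic function on $[0,1]$ which is not identically zero. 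Therefore its zero set is discrete, and we may pick $t_0>0$ arbitrarily small with $f(\sigma(t_0))\neq 0$.

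Taking $t_0$ small enough that all three conditions $\sigma(t_0)\in B_r^{2n}$, $|f(\sigma(t_0))|\le\delta$ and $f(\sigma(t_0))\ne 0$ hold simultaneously, we obtain $\sigma(t_0)\in E(r,\delta)^*$. But $\sigma(t_0)$ is by hypothesis a critical point of $f$, which contradicts the submersion property established above. The only step that is not entirely mechanical is justifying that $f\circ\sigma$ has only the isolated zero at $t=0$; this is where the real-analyticity of $\sigma$ enters, and it is the single point of the argument that deserves a careful sentence in a formal write-up.
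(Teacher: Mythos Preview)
The paper gives no proof, calling this an ``obvious observation''; your argument is exactly the intended one and is correct. Two small refinements worth noting: the step ``locally trivial fibration $\Rightarrow$ submersion'' presumes the fibration is \emph{smooth} (which is the sense used throughout the paper, via Ehresmann's theorem), and the real-analyticity of $\sigma$ is more directly justified by reading the paper's phrase ``real curve'' as meaning real-analytic curve---$\sigma$ is a given datum here, not something you get to reconstruct via the Curve Selection Lemma.
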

\subsection{Is Newton multiplicity condition  necessary?}\label{necessary}
We give several examples where the Newton multiplicity condition is not satisfied and we check if there exists a critical curve or not.  We use $(x,y)$ as the coordinates of $\mathbb C^2$.
% with $n$ variables.
\begin{Example}\label{Ex1}
Consider the case $f=x^3+y^2,\, g=x^2+y^2$ and $H=(x^3+y^2)(\bar x^2+\bar y^2)$.
We see that $f,g$ does not satisfy Newton multiplicity condition as $d(P;f)=d(P;Q)=2$ for $P=(1,1)$.
Note that the Jacobian curve $J(f,g)$ has three components $J_1:\,x=0$, $J_2:\,y=0$ and $J_3:\, 3x-2=0$.
$J_2$ and $J_3$ are not critical curves. $J_1$ is a critical curve. In fact putting
$\omega(t)=(0,t),\,0\le t\le 1$, we have 
\begin{eqnarray*}
\overline{\partial H}(\omega(t))=g(\omega(t))\overline{\partial f}(\omega(t))&=&(0,2t^2{\bar t}),\,\\
\bar\partial H(\omega(t))=f(\omega(t))\overline{\partial g}(\omega(t))&=&(0,2t^2{\bar t}).
%\text{where}&&\,\,\omega(t)=(0,t).
\end{eqnarray*}
\end{Example}
%%%%%%%%%%%%%%%%%%%%%%%%%%
\begin{Example}\label{still-OK1}
Let $f(x,y)=x^3-y^2,\,g(x,y)=x^2-y^3$. Then $H$ does not satisfy the Newton multiplicity condition as $d(P;f)=d(P;g)=2$ for $P=(1,1)$. The Jacobian curve is given by $xy(-9xy+4)=0$ and it has two local components
at the origin.
We can see easily none of them include a critical curve for $H$. Thus $H$ has a tubular Milnor fibration. 
\end{Example}
\begin{Example} Let $f(x,y)=x(y^2+x^3)+y^4$ and $g(x,y)=y(x^2+y^3)+x^4$. Then $H$ does not satisfy the Newton multiplicity condition. Compare with previous Example \ref{still-OK1}.
The Jacobian ideal is defined by 
$J(x,y)=-3y^2x^2+4 y^5+4 x^5-8 x^4y-8 y^4x$ and it has two irreducible factors at the origin.
One of the branch is parametrized as 
\[x(t)=t^2,\,\, y(t)=\frac 23 \sqrt{3} t^3-\frac 43 t^4+\text{(higher terms)}
\]
As $\lim_{t\to 0}({f_x(x(t),y(t))}/{g_x(x(t),y(t))})(g(x(t),y(t))/f(x(t),y(t))=8/7\ne 1$, we see this branch does not contain any non-zero critical point of $H$. As $f$ and $g$ are  symmetric in $x,y$, the other branch does not have any critical point  and $H$ has a Milnor fibration.
\end{Example}

%Then Hamm-L\^e condition is satisfied  and the existence of Milnor fibration follows.

\section{Existence problem of non-constant critical curves}
In this chapter, we consider the existence or non-existence of critical curves for the plane curve case $n=2$.
For the simplicity, we use $(x,y)$ as the coordinates of $\mathbb C^2$ in this chapter.
Though we consider the case $n=2$, the argument works for general dimension with a slight modification.
\subsection{Branches of plane curves}
Let $k(x,y)$ be a germ of holomorphic functions and we consider 
the Newton boundary $\Ga(k)$. Let $\De_1,\dots, \De_\ell$ be the  edges of $\Ga(k)$ and let 
$P_i=(p_i,q_i)$ be the corresponding weight vector for $\De_i,\,i=1,\dots,\ell$.
Consider the face function
$k_{\De_i}(x,y)$. It  has a factorization  as 
\[
k_{\De_i}(x,y)=c_ix^{a_i}y^{b_i}\prod_{j=1}^{\nu_i} (y^{p_i}-\al_{ij}^{p_i}x^{q_i})^{\mu_j},\,c_i\ne 0.
\]
For each $j$, there is a branch (or branches) $C_{ij}$ which  is parametrized as 
\[
C_{ij}:\,x(t)=t^{p_ir_{ij}},\, y(t)=\al_{ij} t^{q_ir_{ij}}+\text{(higher terms)},\,\exists r_{ij}\in \mathbb N.
\]
This follows from an admissible toric modification (see\cite{Okabook}).
We say the germ $C_{ij}$ is {\em rooted at the factor $ (y^{p_i}-\al_{ij}^{p_i}x^{q_i})^{\mu_j}$ on  the face $\De_i$}.
Every branch of $k(x,y)=0$ is rooted at  some $\De_i$ and  some $1\le j\le \nu_i$ as above except possibly the coordinate axis
$x=0$ or $y=0$ will be a branch if  $x|k$ or $y|k$ respectively.
%%%%%%%%%%%%%%%%%%%
\subsection{Branches of Jacobian curves}
Now we consider again holomorphic function $h=fg$ and the mixed function $H=f\bar g$ as before.
Consider the Jacobian curve
$J=J(f,g)$ which is  defined by  %by abuse of notation
$J(x,y)=0$ where
\[
J(x,y)=\frac{\partial f}{\partial x}(x,y)\frac{\partial g}{\partial y}(x,y)-
\frac{\partial f}{\partial y}(x,y)\frac{\partial g}{\partial x}(x,y). 
\]
Consider a face $\De_i$ of $\Ga(J)$ with weight vector $P_i=(p_i,q_i)$.
We say $\De_i$ is {\em a face of the  first type}  if $J_{P_i}(x,y)=J(f_{P_i},g_{P_i})(x,y)$.
In this case, we have 
$d(P_i;J)=d(P_i;f)+d(P_i;g)-(p_i+q_i)$.
%Or we call briefly an easy face.
Otherwise,  we say $\De_i$ {\em  a  hidden face}. In this latter case, we have
$J(f_{P_i},g_{P_i})=0$ and $d(P_i;J)>d(P_i;f)+d(P_i;g)-(p_i+q_i)$.

Consider a face $\De_i$ of the first kind  as above and  the factorization of the form
\[
J_{P_i}(x,y)=c_ix^{a_i} y^{b_i} \prod_{j=1}^{\nu_i}(y^{p_i}-\al_{ij}^{p_i}x^{q_i})^{\mu_j}.
\]
Consider a branch $\ga$ which is rooted at the factor $(y^{p_i}-\al_{ij}^{p_i}x^{q_i})^{\mu_j}$.
It has a parametrization for some integer $r_{ij}>0$ as follows.
\[\ga:
\begin{cases}
&x(t)=t^{p_ir_{ij}}\\
&y(t)=\al_{ij}t^{q_ir_{ij}}+\text{(higher terms)}
\end{cases},\,t\in D_\eps=\{\eta\in \mathbb C\,|\, |\eta|\le \eps\}.
\]
In the case $\mu_j>1$, it is possible that there exist  several   irreducible germs with such expression.
(In an admissible toric modification $\hat \pi:X\to \mathbb C^2$, the strict transform $\tilde \ga$ of $\ga$ intersects with  the exceptional divisor $\hat E(P_i)$ corresponding to $P_i$ (see \cite{Okabook}).
We say that $\ga$ is  {\em non-tangential} to $V(f,g)$  if
$f_{P_i}(t^{p_i},\al_{ij} t^{q_i})\ne 0$ and $g_{P_i}(t^{p_i},\al_{ij} t^{q_i})\ne 0$.
This is equivalent to $\tilde \ga\cap (\tilde V(f)\cup\tilde V(g))\cap \hat E(P_i)=\emptyset$
where $\tilde V(f),\tilde V(g)$ are  strict transforms of $V(f)$ and $V(g)$ respectively.
In particular, %we can see easily
$\ga\not\subset V(f)\cup V(g)$.
\begin{Theorem}\label{existence criterion}
Assume that $\ga$ is a non-tangential  branch  of the Jacobian curve  $J$ which comes from a face of $\Ga(J)$ of first type as above. Then $\ga$ contains a non-constant critical curve of $H$ at the origin
if and only if $d(P;f)=d(P;g)$.
\end{Theorem}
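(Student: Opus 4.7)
The plan is to parametrize the branch $\ga$ by its local expansion $x(t)=t^{pr},\,y(t)=\al\,t^{qr}+\text{(higher terms)}$ with $P=(p,q)=P_i$, $r=r_{ij}$, $\al=\al_{ij}$, to translate the mixed critical condition of Proposition~\ref{mixed critical} into a single real equation in $t$, and then to compute its leading term using Euler's identity for the weighted homogeneous face functions $f_P,g_P$. The non-tangentiality hypothesis $f_P(1,\al)\ne 0\ne g_P(1,\al)$ guarantees that $f(\ga(t))$, $g(\ga(t))$ and all their first partials have exactly the expected $t$-orders, with non-zero leading coefficients obtained from $f_P,g_P$ at $(1,\al)$. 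Write $d_f=d(P;f)$ and $d_g=d(P;g)$.

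For $H=f\bar g$ one has $\overline{\partial H}=g\,\overline{\partial f}$ and $\bar\partial H=f\,\overline{\partial g}$, so by Proposition~\ref{mixed critical} a point $\mathbf p\in\ga\setminus V(H)$ is a critical point of $H$ iff there is $\kappa\in\mathbb C$ with $|\kappa|=1$ such that $g\,\overline{\partial f}(\mathbf p)=\kappa\,f\,\overline{\partial g}(\mathbf p)$. Since $\ga\subset J(f,g)$, the gradients are proportional, $\partial f=\be\,\partial g$ for some non-zero meromorphic function $\be$, and then each non-degenerate coordinate of the vector equation yields the same $\kappa=\overline{\be}\,g/f$. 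Along $\ga\setminus\{\mathbf 0\}$ the critical condition therefore collapses to the single real equation $|R(t)|=1$, where
\[
R(t):=\be(\ga(t))\,\frac{g(\ga(t))}{f(\ga(t))}.
\]
A direct $t$-order count (using $\be=f_x/g_x$ of order $(d_f-d_g)r$ balanced against $g/f$ of order $(d_g-d_f)r$, with a $y$-variant if the $x$-partials vanish) shows that $R$ extends holomorphically to $t=0$ with non-zero value.

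To evaluate $R(0)$, note that $f_P(1,\al)\ne 0$ together with Euler's identity $p(f_P)_x(1,\al)+q\al(f_P)_y(1,\al)=d_f f_P(1,\al)$ forces $\partial f_P(1,\al)\ne 0$, and similarly $\partial g_P(1,\al)\ne 0$. The first-type hypothesis gives $J(f_P,g_P)(1,\al)=0$, so there exists a unique $\be_0\ne 0$ with $\partial f_P(1,\al)=\be_0\,\partial g_P(1,\al)$. Substituting this relation into the Euler identity for $f_P$ and invoking Euler for $g_P$ yields $d_f f_P(1,\al)=\be_0 d_g g_P(1,\al)$, whence
\[
R(0)=\be_0\cdot\frac{g_P(1,\al)}{f_P(1,\al)}=\frac{d_f}{d_g}.
\]
If $d_f\ne d_g$, then $|R(0)|=d_f/d_g\ne 1$, so $|R(t)|\ne 1$ on a punctured neighborhood of $t=0$ and $\ga$ carries no critical point of $H$ near the origin other than $\mathbf 0$ itself; this proves the "only if" direction.

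For the "if" direction assume $d_f=d_g$, so $R(0)=1$. Either $R\equiv 1$ on $\ga$ and every point of $\ga$ is a critical point of $H$, or $R(t)=1+c\,t^k+O(t^{k+1})$ with $c\ne 0$ and $k\ge 1$, in which case
\[
|R(t)|^2-1=2\Re(c\,t^k)+O(|t|^{k+1})
\]
has leading term $2\Re(c\,t^k)$ changing sign on every small circle around $0$. Thus $0$ is a non-isolated real zero of the real-analytic function $|R|^2-1$, and the Curve Selection Lemma produces a real-analytic arc through $0$ in $\{|R|^2=1\}\subset D_\eps$; its image under $\ga$ is a real-analytic arc of critical points of $H$ inside $\ga$. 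Non-constancy is automatic because $|H(\ga(t))|=|f(\ga(t))||g(\ga(t))|$ has $t$-order $(d_f+d_g)r>0$ with non-zero leading coefficient. The main obstacle is the Euler step, where the first-type hypothesis must be upgraded from the scalar relation $J(f_P,g_P)(1,\al)=0$ to the honest linear dependence $\partial f_P(1,\al)=\be_0\,\partial g_P(1,\al)$; non-tangentiality (non-vanishing of $f_P,g_P$ at $(1,\al)$) combined with Euler is precisely what rules out the degenerate possibility that one of the two gradients vanishes there.
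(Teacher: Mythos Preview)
Your proof is correct and follows essentially the same route as the paper: parametrize $\gamma$, reduce the mixed critical condition of Proposition~\ref{mixed critical} to a single modulus equation along the branch, and compute its leading term via the Euler identity for $f_P,g_P$, obtaining $d(P;f)/d(P;g)$. The only cosmetic differences are that you work with the holomorphic quantity $R(t)=\beta\,g/f$ (whose modulus equals that of the paper's $\lambda(t)=\bar\beta\,g/f$) and, for the ``if'' part, you replace the explicit $2k$-solution count of Lemma~\ref{2k} by a sign-change plus Curve Selection Lemma argument; both give the required real-analytic arc of critical points.
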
 
\begin{proof} Assume that $\ga(t)=(x(t),y(t))$ is a critical point of $H$ for a  sufficiently small $t$. Then   there exists a complex number 
$\la(t)$ with $|\la(t)|=1$ such that 
\begin{eqnarray}\label{crit}
\overline{\partial f(x(t),y(t))}g(x(t),y(t))=\la(t)\overline{\partial g(x(t),y(t))}f(x(t),y(t)).
\end{eqnarray}
As $\ga(t)$ is a branch of the Jacobian curve, we have the equality
\begin{eqnarray}\label{jacob}
%f_x(x(t),y(t))g_y(x(t),y(t))= g_x(x(t),y(t))f_y(x(t),y(t)).
f_x g_y-f_y g_x=0, \,\text{on }\,\,\ga(t)
\end{eqnarray} 
and taking the first lowest term, $f_{Px}g_{Py}-f_{Py}g_{Px}=0$ also  holds on $\ga(t)$.
%Thus $f_x/g_x=f_y/g_y$ along $\ga$. 
 Note that  $f_x/g_x=f_y/g_y$ along $\ga$ and 
\[\frac{f_x((x(t),y(t) )}{g_x((x(t),y(t))}=\frac{f_{P,x}(1,\al)}{g_{P,x}(1,\al)}(1,\al)t^{d(P;f)-d(P;g)}+\text{(higher terms)}.
\]
Here we  assume $f_{P,x}(1,\al),g_{P,x}(1,\al)\ne 0$. 
(If $g_{P,x}(1,\al)=0$ for example,
it implies $f_{P,x}(1,\al)=0$ and $f_{P,y}(1,\al), g_{P,y}(1,\al)\ne 0$ by the 
equation (\ref{jacob}) and non-tangential assumption and by  the Euler equality.
In that case, we use ${f_y/g_y}$ instead of ${f_x/g_x}$.)
Put $\de:={f_{P,x}(1,\al)}/{g_{P,x}(1,\al)}$.
Then we have
\begin{eqnarray*}
\frac{f(x(t),y(t))}{g(x(t),y(t))}&=&\frac{f_P(1,\al)}{g_P(1,\al)}t^{d(P;f)-d(P;g)}+\text{(higher terms)}
\end{eqnarray*}
and using Euler equality  the coefficient of the first term can be written as 
\begin{eqnarray*}
\frac{f_P(1,\al)}{g_P(1,\al)}&=&\frac{( f_{P,x}(1,\al)+\al f_{P,y}(1,\al))/d(P;f)}
{(g_{P,x}(1,\al)+\al g_{P,y}(1,\al))/d(P;g)}\\
&=&\frac{\de (g_{P,x}(1,\al)+\al g_{P,y}(1,\al))/d(P;f) }{( g_{P,x}(1,\al)+\al g_{P,y}(1,\al))/d(P;g)}\\
&=&\de\frac{d(P;g)}{d(P;f)}.
\end{eqnarray*}
Thus we can write
\[\begin{split}
\la(t)&=
\frac{\overline{f_x(x(t),y(t))}}{\overline{g_x(x(t),y(t))}}/\frac{f(x(t),y(t))}{g(x(t),y(t))}\\
&=\frac{\bar \delta}{\delta}\frac{d(P;f)}{d(P;g)}+\text{(higher terms in $t$)}
\end{split}
\]
If $d(P;f)\ne d(P;g)$, for sufficiently small $t$, we see that 
$|\la(t)|\ne 1$ and there does not exist a critical point of $H$ by Proposition \ref{mixed critical}.
Suppose that $d(P;f)=d(P;g)$. We see that
$|\la(t)|\equiv 1$ modulo $(t)$.

If $|\la(t)|= 1$ constantly, the whole $\ga(t),\,t\in D_\eps$ is a critical curve of $H$.
This happens when $f$ and $g$ are weighted homogeneous polynomials of the same degree under the same weight $P$.

Assume that $\la(t)=a_0+a_kt^k+\text{(higher terms)},\,a_k\ne 0,\,|a_0|=1$. 
By the next Lemma \ref{2k}, there exists a positive number $\eps'$ such that for any $r\le \eps'$ fixed and $|t|=r$,
there exists $2k$ solutions $t=re^{i\theta_j},\,j=1,\dots, 2k$ of $|\la(t)|=1$.
They are parametrized real analytically in $r\in[0,\eps']$ and give non-constant critical curves for $H$.
%Consider the equation
 %\[\la(t)-1=a_kt^k+\text{(higher terms)}=e^{is}-1.
%\]
%By the Rouch\'e principle (see for example, \cite{Ahlfors}), we have $k$ real analytic solution $t_m(s),\,m=1,\dots, k$  for $0\le s\le\eps$ with $t_m(0)=0$.
%Thus the real analytic curve
%$\eta(s): [0,\eps]\to \mathbb C^2$, $s\mapsto (x(t_m(s)),y(t_m(s)))$ are  critical curves for $H$ for $m=1,\dots,k$ and  by definition $\eta([0,\eps])\subset \ga(D_\eps)$.
\end{proof}
\begin{Lemma}\label{2k}
Let $\rho(t)$ be a holomorphic function on the disk $D_\eps:=\{\zeta\in\mathbb C\,|\, |\zeta|\le \eps\}$
such that $\rho(0)=a_0,\,|a_0|=1$ and $\rho(t)\not \equiv a_0$. Let $k=\ord_t\, (\rho(t)-a_0)$. Then there exists a positive number $\eps'\le \eps$ such that 
for any $0<r\le \eps'$,  there are $2k$ angles $0\le \theta_1,\dots,\theta_{2k}<2\pi$ such that 
$|\rho(re^{i\theta_j}|=1$ for $j=1,\dots, 2k$.
\end{Lemma}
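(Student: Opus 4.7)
The plan is to reduce the equation $|\rho(re^{i\theta})| = 1$, for fixed small $r>0$, to a $2\pi$-periodic real equation in $\theta$ whose leading part is a cosine of $k\theta$, and then count roots by a perturbation-of-simple-zeros argument.

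First I would Taylor-expand $\rho$ around the origin. By hypothesis $\rho(t) = a_0 + b_k t^k + t^{k+1} R(t)$ with $b_k \neq 0$ and $R$ holomorphic on $D_\eps$. Since $|a_0|=1$, compute
\[
|\rho(t)|^2 - 1 \;=\; \rho(t)\overline{\rho(t)} - a_0 \bar a_0 \;=\; \bar a_0 b_k t^k + a_0 \bar b_k \bar t^k + O(|t|^{k+1}).
\]
Writing $\bar a_0 b_k = c e^{i\phi}$ with $c>0$, and substituting $t = re^{i\theta}$, I would obtain
\[
|\rho(re^{i\theta})|^2 - 1 \;=\; 2c\, r^k \cos(k\theta + \phi) + r^{k+1} \Phi(r,\theta),
\]
where $\Phi(r,\theta)$ is a bounded continuous function on $[0,\eps] \times \mathbb R$, $2\pi$-periodic in $\theta$.

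Next I would study the reduced equation $F(r,\theta) := r^{-k}(|\rho(re^{i\theta})|^2-1) = 2c\cos(k\theta + \phi) + r\,\Phi(r,\theta)$. At $r=0$ this is the function $\theta \mapsto 2c\cos(k\theta + \phi)$, which has exactly $2k$ zeros $\theta_j^{(0)}$ in $[0,2\pi)$, and at each such zero the $\theta$-derivative equals $\mp 2ck \neq 0$, so every zero is simple. Applying the implicit function theorem at each of these $2k$ zeros, I obtain a positive $\eps'\le\eps$ and $2k$ real-analytic functions $\theta_j(r)$ on $[0,\eps']$ with $\theta_j(0)=\theta_j^{(0)}$ and $F(r,\theta_j(r)) = 0$.

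Finally I need to argue that these are all the zeros of $\theta \mapsto F(r,\theta)$ on $[0,2\pi)$ for $r$ small enough. For this I would use a standard compactness/uniform-convergence argument: since $F(r,\theta) \to 2c\cos(k\theta+\phi)$ uniformly in $\theta$ as $r \to 0$, and since the limit function is bounded away from zero outside any fixed neighborhood of its $2k$ zeros, for $r$ sufficiently small $F(r,\cdot)$ is also bounded away from zero outside these neighborhoods. Inside each small neighborhood, simplicity of the limiting zero together with the uniform $C^1$-convergence of $F(r,\cdot)$ in $\theta$ (which follows by differentiating the expansion above termwise) forces exactly one zero. Hence for $0 < r \le \eps'$, the equation $|\rho(re^{i\theta})|=1$ has exactly the $2k$ solutions $\theta_1(r),\dots,\theta_{2k}(r)$, as required. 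The only delicate step is the uniform $C^1$-control of the remainder $r\Phi(r,\theta)$ in $\theta$, but this is immediate from the convergence of the Taylor series on $D_\eps$.
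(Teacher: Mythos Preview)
Your proof is correct, but it proceeds by a different route than the paper's. The paper argues geometrically: from the Taylor expansion $\rho(t)=a_0+a_kt^k+\cdots$ it deduces that for small $r$ the loop $\theta\mapsto\rho(re^{i\theta})$ stays in a thin annulus about $a_0$ and has argument (relative to $a_0$) strictly increasing in $\theta$, so the image is topologically a $k$-fold winding around $a_0$; since $|a_0|=1$, the unit circle passes through $a_0$ and is crossed exactly twice per winding, giving $2k$ solutions. Your approach instead expands $|\rho(re^{i\theta})|^2-1$ explicitly, extracts the leading term $2cr^k\cos(k\theta+\phi)$, and counts its $2k$ simple zeros via the implicit function theorem together with a uniform $C^1$ perturbation argument. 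The paper's argument is shorter and more pictorial; yours is more computational but has the advantage of yielding the real-analytic dependence $r\mapsto\theta_j(r)$ directly from the analytic implicit function theorem, a fact the paper actually uses in the proof of Theorem~\ref{existence criterion} just above. One small remark: for the IFT step you need $F(r,\theta)$ to be at least $C^1$ jointly in $(r,\theta)$ down to $r=0$, not just $C^1$ in $\theta$; this holds because $|\rho(re^{i\theta})|^2-1$ is a convergent series $\sum_{m\ge k}r^mP_m(\theta)$ with trigonometric polynomial coefficients, so dividing by $r^k$ still gives a real-analytic function of $(r,\theta)$.
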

\begin{proof}
Consider the Taylor expansion
\[
\rho(t)=a_0+a_kt^k+\text{(higher terms)}, \quad a_k\ne 0.
\]
Then there exists $\eps'\le \eps$ such that 
\[\begin{split}
& |a_k||t^k|\frac 12\le   |\rho(t)-a_0|\le |a_k||t^k|\frac 32,\\
 &\frac{d\arg{(\rho(re^{i\theta})-a_0)}}{d\theta}>0,
 \, \forall |t|\le \eps',
 \,\forall \theta\in [0,2\pi].
 \end{split}
\]
Thus the behavior of the loop
$\theta\mapsto \rho(re^{i\theta})-a_0$ is  topologically $k$ times rotation along the sphere of radius 
$|a_k|r^k$ centered at $a_0$. Thus  it intersects $2k$ times with the unit sphere $S_1=\{\zeta||\zeta|=1\}$.
\end{proof}
\begin{Example}
Let $f(x,y)=x^5+x^2 y^2+y^6$ and $g(x,y)=x^6+x^2y^2+y^5$.
Then $H$ does not satisfy the Newton-multiplicity condition as $d(P;f)=d(P;g)=4$ for $P=(1,1)$.
The Jacobian ideal is defined by 
\[\begin{split}
J:=&-xy j_2(x,y),\,\\
& j_2(x,y)=-10 x^5-25 x^3y^3-10 y^5+12 x^6+36 x^4 y^4+12 y^6.
\end{split}
\]
There exist three germs of Jacobian curves: $\{x=0\},\, \{y=0\}$ and 
$C=\{j_2=0\}$. It is easy to see that the first two coordinate axes are not critical curves.
$C$ consists of  %two  components $C_1$ and $C_2$.
 5 smooth components. They are rooted to the face $\De$ with face function $xy(-10 x^5-10 y^5)$
 and $\De$ is a hidden face as $f_P=g_P=x^2y^2$ and $J(f_P,g_P)=0$.
They have  the Taylor expansions
\[
\begin{split}
C_1:&\quad y=-x+\frac{49}{50} x^2+\text{(higher terms)}\\
C_{2a}:&\quad y=ax+(-\frac 12+\frac{13}{50}a-\frac {13}{50}a^2+\frac 12a^3)x^2+\text{(higher terms)}\\
&\text{where}\,\, a^4-a^3+a^2-a+1=0.
\end{split}
\]
We claim each complex branch contains a critical curve for $H$.
\end{Example}
\begin{proof}
If $C_1$ or $C_{2a}$ is a critical curve, it must satisfy
$|\frac{\partial f}{\partial x}/\frac{\partial g}{\partial x}|=|f/g|$. 
%We assert that none of $C_1$  or $C_2$ does not satisfy this condition. 
Let us  see the assertion on $C_1$.
\[\begin{split}
|\frac{\partial f}{\partial x}/\frac{\partial g}{\partial x}|&=|1+\frac{49}{25}x+\text{(higher terms)}|\\
|f/g|&=|1+2x+\text{(higher terms)}|.
\end{split}
\]
Thus 
\[
|\frac{\partial f}{\partial x}/\frac{\partial g}{\partial x}|/| f/g|=|1+\frac 12 x-\text{(higher terms)}|.
\]
Consider the equation
\[
|1+\frac 12 x-\text{(higher terms)}|=1\]
on the circle $x=re^{i\theta}$.
By Lemma \ref{2k}, for fixed $r\le \eps$ small  enough, this has  two solutions
$\theta_i(r),\,i=1,2,\,0\le \theta_i(r)<2\pi$  for sufficiently small $\epsilon$.
Then  $\mathbf z_i(r):=re^{i\theta_i(r)}, i=1,2$ satisfy $\mathbf z_i(0)=\mathbf 0$
and $r\mapsto \mathbf z_i(r)$ give  non-constant critical curves for $H$. For $C_{2a}$, we omit the proof as the argument is the same.
\end{proof}
\begin{Example}
Let $f(x,y),g(x,y)$ be homogeneous polynomials of  same degree $d$. Then $J(x,y)$ is a homogeneous polynomial of degree $2d-2$. Take any branch germ $\ga$ of $J(x,y)=0$ which is non-tangential to $V(f,g)$.
Then $\ga$ contains a critical  curve for $H$.
As an example, take $f(x,y)=x^2+xy+y^2,\,g(x,y)=x^2-xy+y^2$.
Then Jacobian curve is defined by
$y^2-x^2$. This gives two branches $y=\pm x$ which are non-tangential and they are critical curves.
In this case, $C(H)=J(f,g)$.
\end{Example}
We finish this paper by  the following Lemma which follows from Lemma \ref{2k}. 
%This  corresponds to a corrected assertion of  Theorem 2.3, \cite{Param-Tibar}.

%The above expansion imply the assertion. Thus $H$ has no critical curve and there exists a tubular Milnor fibration.
%\end{Example}
\begin{Lemma} \label{existence criterion2}
Let $f,g$ be  a holomorphic function pair without any common divisor, $H=f\bar g$ and let $J=J(f,g)$ be the Jacobian curve.
Take a local branch germ $\gamma$ of $J$ at the origin defined by a Taylor expansion  $\mathbf z(t),\,t\in D_\epsilon$ which is  not included in $ V(H)$.
$\gamma$ contains a non-constant critical curve for $H$ if and only if
\[ 
\lim_{t\to 0}\left |\frac{\frac{\partial f}{\partial z_j}(\mathbf z(t)) g(\mathbf z(t))}{\frac{\partial g}{\partial z_j}(\mathbf z(t)) f(\mathbf z(t))}\right |=1.
\]
Here $j$ is chosen so that  $\frac{\partial g}{\partial z_j}(\mathbf z(t)) \not = 0$.
\end{Lemma}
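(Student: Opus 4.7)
The plan is to reduce the mixed critical-point condition for $H$ along the branch $\gamma$ to a single modulus-one condition on a meromorphic function of the branch parameter $t$, and then apply Lemma \ref{2k} to produce the critical arcs.

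First I would invoke condition (4) of Proposition \ref{mixed critical}: a point $\mathbf{p}\notin V(H)$ is critical for $H$ if and only if there exists $\lambda$ with $|\lambda|=1$ satisfying $\overline{\partial H}(\mathbf{p}) = \lambda\,\bar\partial H(\mathbf{p})$. For $H=f\bar g$ the elementary formulas $\partial_j H = f_j\bar g$ and $\bar\partial_j H = f\,\overline{g_j}$ turn this into
\[
\overline{f_j(\mathbf{p})}\,g(\mathbf{p}) \;=\; \lambda\, f(\mathbf{p})\,\overline{g_j(\mathbf{p})},\qquad j=1,\dots,n.
\]
On the Jacobian curve the identities $f_j g_k - f_k g_j\equiv 0$ force the ratios $f_j/g_j$ to coincide wherever they are defined, so the system collapses to a single equation. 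Choosing (as permitted by the hypothesis on $j$) an index with $g_j(\mathbf{p})\ne 0$, and using $f(\mathbf{p}),g(\mathbf{p})\ne 0$ since $\gamma\not\subset V(H)$, one solves $\lambda=\overline{(f_j/g_j)}\cdot(g/f)$; the requirement $|\lambda|=1$ is then exactly $|\rho(\mathbf{p})|=1$, where
\[
\rho \;:=\; \frac{f_j\,g}{g_j\,f}.
\]

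Set $\rho(t):=\rho(\mathbf{z}(t))$, a meromorphic function on $D_\eps$. For the only-if direction, a non-constant critical curve $\sigma(s)\subset\gamma$ corresponds to a sequence $t_n\to 0$ in $D_\eps$ with $|\rho(t_n)|=1$. Since $\rho$ is meromorphic at $0$, its expansion $\rho(t) = a_N t^N + (\text{higher terms})$ with $a_N\ne 0$ forces $|\rho(t)| \to 0$ if $N>0$ and $|\rho(t)|\to\infty$ if $N<0$, contradicting $|\rho(t_n)|=1$; thus $N=0$ and $\lim_{t\to 0}|\rho(t)|=|a_0|=1$.

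For the if direction, assume $\lim_{t\to 0}|\rho(t)|=1$. Then $\rho$ has neither zero nor pole at $0$, extends holomorphically to $D_\eps$, and $\rho(0)=:c_0$ has $|c_0|=1$. If $\rho\equiv c_0$, the entire branch $\gamma$ is a critical curve (non-constant since $\gamma\not\subset V(H)$). Otherwise, with $k=\ord_t(\rho(t)-c_0)\ge 1$, Lemma \ref{2k} gives $\eps'>0$ such that every circle $|t|=r\le\eps'$ carries exactly $2k$ solutions of $|\rho|=1$; the transversality $d\arg(\rho(re^{i\theta})-c_0)/d\theta>0$ established in its proof, together with the real-analytic implicit function theorem applied to $|\rho(re^{i\theta})|^2-1=0$ in $(r,\theta)$, organizes these into $2k$ real-analytic arcs $r\mapsto re^{i\theta_\nu(r)}$ emanating from $t=0$. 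Composing with $\mathbf{z}$ yields $2k$ non-constant critical curves for $H$ on $\gamma$. The main obstacle is bookkeeping: checking that $\rho(t)$ is well-defined along $\gamma$ (there exists $j$ with $g_j\circ\mathbf{z}\not\equiv 0$, since otherwise $\gamma$ lies in the critical set of $g$, contradicting the isolated-singularity hypothesis), and that Lemma \ref{2k}'s pointwise count genuinely promotes to real-analytic arcs.
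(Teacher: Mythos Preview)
Your proof is correct and follows essentially the same route as the paper, which states only that the lemma ``follows from Lemma \ref{2k}'' and otherwise relies on the template already worked out in the proof of Theorem \ref{existence criterion}. Your choice to work with the holomorphic function $\rho(t)=\dfrac{f_j(\mathbf z(t))\,g(\mathbf z(t))}{g_j(\mathbf z(t))\,f(\mathbf z(t))}$ rather than the non-holomorphic $\lambda(t)=\overline{(f_j/g_j)}\cdot(g/f)$ is a clean touch, since $|\rho|=|\lambda|$ and Lemma \ref{2k} is stated for holomorphic functions; this makes the application of that lemma more transparent than in the paper's own treatment of Theorem \ref{existence criterion}.
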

%We thank to the referee for the valuable informations about  papers related  to the subject of our paper.
%\bibliographystyle{abbrv}
%\bibliography{../../okabib}
%\input NonDeg3.bbl
\def\cprime{$'$} \def\cprime{$'$} \def\cprime{$'$} \def\cprime{$'$}
  \def\cprime{$'$} \def\cprime{$'$} \def\cprime{$'$} \def\cprime{$'$}

\end{document}